\definecolor{cof}{RGB}{219,144,71}
\definecolor{pur}{RGB}{186,146,162}
\definecolor{greeo}{RGB}{91,173,69}
\definecolor{greet}{RGB}{52,111,72}
\newtheorem{theorem}{Theorem}[section]
\newtheorem{lemma}[theorem]{Lemma}
\newtheorem{proposition}[theorem]{Proposition}
\newtheorem{cor}[theorem]{Corollary}
\theoremstyle{definition}
\newtheorem{definition}[theorem]{Definition}
\theoremstyle{remark}
\newtheorem{remark}[theorem]{\bf{Remark}}
\numberwithin{equation}{section}
\newcommand{\sm}{{\rm Sm}\,}
\newcommand{\ksm}{{\rm k\text{-}Sm}\,}
\newcommand{\rint}{{\rm Int_r}\,}
\newcommand{\ext}{{\rm Ext}\,}
\newcommand{\dm}{{\rm dim}\,}
\newcommand{\aff}{\text{aff}}
\newcommand{\co}{ \text{co}}
\newcommand{\spn}{{\rm span}}
\begin{document}
	
	\title[Refinements of the Blanco-Koldobsky-Turn\v{s}ek Theorem]{Refinements of  the Blanco-Koldobsky-Turn\v{s}ek Theorem }
	\author[ Manna, Mandal, Paul and Sain  ]{Jayanta Manna, Kalidas Mandal,    Kallol Paul and Debmalya Sain }

    \address[Mandal]{Department of Mathematics\\ Jadavpur University\\ Kolkata 700032\\ West Bengal\\ INDIA}
	\email{kalidas.mandal14@gmail.com}
	\address[Manna]{Department of Mathematics\\ Jadavpur University\\ Kolkata 700032\\ West Bengal\\ INDIA}
	\email{iamjayantamanna1@gmail.com}

	\address[Paul]{Vice-Chancellor, Kalyani University \& Professor of Mathematics (on lien) \\ Jadavpur University \\ Kolkata \\ West Bengal\\ INDIA}
	\email{kalloldada@gmail.com}
	
		\newcommand{\acr}{\newline\indent}
	\address[Sain]{Department of Mathematics\\ Indian Institute of Information Technology, Raichur\\ Karnataka 584135 \\INDIA}
	\email{saindebmalya@gmail.com}

	\subjclass[2010]{Primary 46B20,  Secondary 46B04}
	\keywords{Isometry; preservation of orthogonality; norm derivatives; polyhedral Banach spaces }	
	
	\begin{abstract}
		We refine  the well-known Blanco-Koldobsky-Turn\v{s}ek theorem which states that a norm one linear operator defined on a Banach space is an isometry if and only if it preserves orthogonality at every element of the space.  We improve the result for  Banach spaces in which the set of  all smooth points forms a dense $G_{\delta}$ set. We further demonstrate that if a norm one operator preserves orthogonality on a hyperplane not passing through the origin then it is an isometry. In the context of finite-dimensional Banach spaces, we prove that preserving orthogonality on the set of all extreme points of the unit ball forces the norm one operator to be an isometry, which substantially refines the Blanco-Koldobsky-Turn\v{s}ek theorem. Finally, for finite-dimensional polyhedral spaces, we establish the significance of the set of all $k$-smooth points for any possible $k,$ in the study of isometric theory.
	\end{abstract}

	\maketitle
	
	\section{Introduction.}
	
	The famous Blanco-Koldobsky-Turn\v{s}ek characterization \cite{BT06,K93} of isometries as norm one linear operators that preserve Birkhoff-James orthogonality at each element of the space, is one of the major achievements in the isometric theory of Banach spaces. Due to appropriate importance of this  result on global orthogonality preservation, it is worth exploring local versions of this phenomenon. Recently in \cite{SMP24}, some  refinements of the theorem was obtained in the setting of finite-dimensional Hilbert spaces and some special finite-dimensional polyhedral spaces. The purpose of this article is to substantially refine the Blanco-Koldobsky-Turn\v{s}ek characterization theorem for general Banach spaces. We now introduce the notations and terminologies to the readers which are essential to present this article. 
	
	The letters $ \mathbb{X}, \mathbb{Y} $ denote real Banach spaces and $\mathbb{X}^*$ denotes the dual space of $\mathbb{X}.$ Let $B_{\mathbb{X}}$ and $S_{\mathbb{X}}$ be the unit ball and the unit sphere of $\mathbb{X}$, respectively.  The convex hull of a nonempty set $D\subset \mathbb{X}$ is denoted by $\co(D).$
	 Let us recall that an affine space is defined as a translation of a subspace of the vector space. The intersection of all affine spaces containing a nonempty set $D \subset \mathbb{X}$ is denoted by $\aff(D).$ By $\text{Int}_r~D,$  we denote the relative interior of a nonempty set $D \subset \mathbb{X},$ i.e., $\text{Int}_r~D=\{x\in D : \text{ there exists }\epsilon>0\text{ such that } B(x,\epsilon)\cap \aff(D)\subseteq D\}.$ 
     For a nonempty convex set $D,$ let $\ext D$ be the collection of all extreme points of $D.$ 
     If for a finite-dimensional Banach space $\mathbb{X},$ $\ext B_{\mathbb{X}}$ is finite then $\mathbb{X}$ is considered a polyhedral Banach space.  A convex subset $F$ of a convex set $G\subset \mathbb{X}$ is said to be a face of $G$ if for any $u,v\in G,~ (1-t)u+v\in F$ implies that $u,v\in F,$ where $0<t<1.$   The dimension of a face $F$ is defined as the dimension of the subspace $\mathbb{V}=\spn \{u-v : u, v \in F\}.$  If the dimension of the face $F$ is $k,$ then $F$ is called a $k$-face of $G.$ A maximal face of $G$ is called a facet of $G$.
	
     For any non-zero $ z\in \mathbb{X},$ we define  $J(z)$  as the collection of all supporting functionals at $z,$ i.e., $ J(z) = \{ f \in S_{\mathbb{X}^*} : f(z) = \| z \| \}. $  A non-zero  $ z \in \mathbb{X} $ is  $ k $-smooth point (or order of smoothness is $k$)  if $\dm\spn \,J(z)=k,$ where $k$ is a natural number. In particular, $1$-smooth points are called smooth points.  In case of an $n$-dimensional polyhedral Banach space $\mathbb{X},$ a point $z\in S_{\mathbb{X}}$ is $k$-smooth if and only if $z$ is in the relative interior of an $(n-k)$-face of $B_{\mathbb{X}}$ \cite{SSP24}.  For more insights on smooth and $k$-smooth points in Banach spaces and their various applications, the readers can see the articles \cite{DMP22,KS05,LR07,MPD22,MP20,PSG16,SPMR20,W18}.  We denote the collection of all smooth points in $\mathbb{X}$ by $\sm\mathbb{X}$ and the collection of all $k$-smooth points in $\mathbb{X}$ by $\ksm\mathbb{X}.$
	
	For a real normed linear space $\mathbb {X},$  the two mappings $\rho'_+, \rho'_-: \mathbb{X}\times \mathbb{X}\rightarrow \mathbb{R}$ are defined by
	\[\rho'_+(u,v)=\|u\|\lim_{t\to 0^+}\frac{\|u+tv\|-\|u\|}{t}\text{ and } \rho'_-(u,v)=\|u\|\lim_{t\to 0^-}\frac{\|u+tv\|-\|u\|}{t},\] where $u,v\in\mathbb{X}.$ These mappings are known as norm derivatives at $u$ in the direction of $v.$
	The mapping $\rho': \mathbb{X}\times \mathbb{X}\rightarrow \mathbb{R}$ defined by
	\[\rho'(u,v)=\frac{1}{2}\rho'_+(u,v) + \frac{1}{2}\rho'_-(u,v)\] is called $M$-\textit{semi inner product}. 
	The mappings $\rho'_{\pm}, \rho'$ are continuous with respect to the second variable, although they may not exhibit continuity in the first variable. The norm derivatives are widely used in exploring the geometry of normed linear spaces, see \cite{A86, AST09, CW13, M87, SA21}. 
	
	Given $u,v\in\mathbb{X}, \rho_+$-orthogonality, $\rho_-$-orthogonality and $\rho$-orthogonality are defined as	\[u\perp_{\rho_+}v\Longleftrightarrow \rho'_+(u,v)=0,~u\perp_{\rho_-}v\Longleftrightarrow \rho'_-(u,v)=0\text{ and }u\perp_{\rho}v\Longleftrightarrow \rho'(u,v)=0,\] respectively.

	Given  $u, v \in \mathbb{X},$  $u$ is Birkhoff–James orthogonal \cite{B35, J47}  to $v,$ written as $ u \perp_B v,$ if $ \| u + \lambda v\| \geq \|u\|$ for all scalars $\lambda. $ According to the  Hahn-Banach theorem,  for a given $u \in \mathbb{X},$ there exist enough $v \in \mathbb{X}$ such that $ u \perp_B v.$ Let $ u^{\perp_B} $ be the collection of all  $v\in \mathbb{X}$ such that $u \perp_B v.$ It is straightforward to observe that the Birkhoff-James orthogonality relation is homogeneous, i.e., $u\perp_B v\implies \alpha u\perp_B \beta v,$ for all scalars $\alpha, \beta.$ In addition, in the case of an inner product space, the Birkhoff-James orthogonality  coincides with the usual inner product orthogonality.  For more on Birkhoff-James orthogonality,  readers can consult the recently published book \cite{Book24}.
	
	The letter $T$ is reserved for a bounded linear operator either from $\mathbb{X}$ to $\mathbb{Y}$ or from $\mathbb{X}$ to $\mathbb{X},$ which will be clear from the context. Consequently we write $  T \in \mathbb{L}(\mathbb{X}, \mathbb{Y}) $ or $  T \in \mathbb{L}(\mathbb{X}).$ We say that $ T $ preserves $\rho_{\pm}$ -orthogonality and $\rho$-orthogonality if $ u, v \in \mathbb{X},$ $ u \perp_{\rho_\pm} v \implies Tu   \perp_{\rho_\pm} Tv$ and $ u \perp_{\rho} v \implies Tu   \perp_{\rho} Tv,$ respectively. Similarly, we say that $T$ preserves Birkhoff-James orthogonality if for $ u, v \in \mathbb{X},$ $ u \perp_B v \Rightarrow Tu \perp_B Tv. $ For the local version of this property, $T$ is said to preserve Birkhoff-James orthogonality at $ u \in \mathbb{X} $ if for all $v \in \mathbb{X},$ $ u \perp_B v \Rightarrow Tu \perp_B Tv. $ Several recent studies \cite{MMPS25,S20,S18,SMP24,SRT21} have demonstrated the importance of the local preservation of Birkhoff-James orthogonality in understanding the geometric properties of the underlying Banach spaces. We recall the following definition introduced in \cite{SMP24}, which is connected with the main objectives of this article: 
	\begin{definition}\label{k-set}
		A set $A\subset S_{\mathbb{X}}$ is called a $\mathcal{K}$-set (Koldobsky-set) if for any operator $  T \in \mathbb{L}(\mathbb{X}) ,$  preservation of Birkhoff-James orthogonality at each point of $A$  under $T$ implies that $T$ is an isometry up to scalar multiplication.
	\end{definition}
	It is evident that identifying a $\mathcal{K}$-set $A$ for a Banach space $\mathbb{X}$, where $A \subsetneq S_{\mathbb{X}}$, will undoubtedly refine the Blanco-Koldobsky-Turnšek theorem for the particular Banach space $\mathbb{X}$. The primary objective of this article is to explore the $\mathcal{K}$-sets in Banach spaces in which the set of all smooth points forms a dense $G_{\delta}$ set. It is clear that our approach is related to the local preservation of Birkhoff-James orthogonality. In this article, we first show that in Banach spaces in which the set of all smooth points forms a dense $G_{\delta}$ set, the set of all smooth points of the unit sphere constitutes a $\mathcal{K}$-set. Furthermore, we establish that any dense subset of the unit sphere is a $\mathcal{K}$-set in such spaces. Next, we establish that for a Banach space whose unit ball is the convex hull of its extreme points, the set of all extreme points of the unit ball also forms a $\mathcal{K}$-set. Finally, we show that in a finite-dimensional polyhedral Banach space, the set of all $k$-smooth points is a $\mathcal{K}$-set, for any possible $k.$ 
	
	\section{Main Results}
    We start this section with the following relation between preservation of Birkhoff-James orthogonality at a point by a bounded linear operator and the smoothness of image of that point. 
\begin{proposition}\label{inverse image}
    Let $T \in \mathbb{L}(\mathbb{X}, \mathbb{Y})$ be such that $T$ preserves Birkhoff-James orthogonality at $x\in \mathbb{X}.$  Then $x$ is smooth if $Tx$ is smooth.
\end{proposition}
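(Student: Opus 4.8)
The plan is to combine the classical Hahn--James description of Birkhoff--James orthogonality, namely that for a non-zero $u$ one has $u^{\perp_B}=\bigcup_{f\in J(u)}\ker f$, with the adjoint operator $T^{*}$. First I would dispose of the trivial situation: if $Tx$ is smooth then in particular $Tx\neq 0$, whence $x\neq 0$ and $T\neq 0$, so the smoothness of $x$ is a meaningful assertion and $J(x)\neq\emptyset$ by the Hahn--Banach theorem.

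Since $Tx$ is smooth, $J(Tx)$ is a singleton, say $J(Tx)=\{g\}$ with $g\in S_{\mathbb{Y}^{*}}$, and consequently $(Tx)^{\perp_B}=\ker g$. Now I would invoke the hypothesis that $T$ preserves Birkhoff--James orthogonality at $x$: for every $v\in x^{\perp_B}$ we have $Tx\perp_B Tv$, i.e.\ $Tv\in\ker g$, i.e.\ $(T^{*}g)(v)=0$. This gives the containment $x^{\perp_B}\subseteq\ker(T^{*}g)$.

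Writing $x^{\perp_B}=\bigcup_{f\in J(x)}\ker f$, the previous step shows that $\ker f\subseteq\ker(T^{*}g)$ for every $f\in J(x)$. As each such $f$ is a non-zero functional, $\ker f$ is a hyperplane, which forces $T^{*}g=\lambda_{f}f$ for some scalar $\lambda_{f}$. Evaluating at $x$ yields $\lambda_{f}\|x\|=\lambda_{f}f(x)=(T^{*}g)(x)=g(Tx)=\|Tx\|\neq 0$, so $\lambda_{f}=\|Tx\|/\|x\|$ is a fixed non-zero constant independent of $f$. Hence every $f\in J(x)$ equals $\tfrac{\|x\|}{\|Tx\|}\,T^{*}g$; in particular $J(x)$ is a singleton, so $\dm\spn J(x)=1$ and $x$ is smooth.

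The only delicate point is passing from $x^{\perp_B}\subseteq\ker(T^{*}g)$ to the proportionality of $T^{*}g$ with every supporting functional of $x$, and then observing that the proportionality constant is \emph{non-zero} --- this is exactly what drives the conclusion, and it is secured by evaluating at $x$ and using $Tx\neq 0$ (which is guaranteed by the smoothness of $Tx$). Everything else is a routine application of standard facts about $J(\cdot)$ and $\perp_B$.
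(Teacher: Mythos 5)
Your proof is correct, and it takes a somewhat different route from the paper's. The paper argues by contradiction: assuming $x$ is not smooth, it picks two linearly independent $f,g\in J(x)$, chooses $y\in\ker f\setminus\ker g$, writes $x=\alpha y+h$ with $h\in\ker g$ via the decomposition $\mathbb{X}=\spn\{y\}\oplus\ker g$, and then uses that $T$ maps $\ker f$ and $\ker g$ into $\ker\psi$ (where $J(Tx)=\{\psi\}$) to conclude $\psi(Tx)=0$, contradicting $\psi(Tx)=\|Tx\|$. You instead argue directly in the dual: from $T(x^{\perp_B})\subseteq\ker\psi$ you get $\ker f\subseteq\ker(T^{*}\psi)$ for every $f\in J(x)$, the standard kernel-inclusion lemma gives $T^{*}\psi=\lambda_{f}f$, and evaluating at $x$ pins down $\lambda_{f}=\|Tx\|/\|x\|\neq 0$, so every supporting functional at $x$ equals $\frac{\|x\|}{\|Tx\|}T^{*}\psi$ and $J(x)$ is a singleton. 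Both arguments rest on the same two facts (the kernels of supporting functionals at $x$ are mapped into $\ker\psi$, and $\psi(Tx)=\|Tx\|\neq 0$), but yours avoids the contradiction and the direct-sum decomposition, and has the small bonus of identifying the unique supporting functional at $x$ explicitly as a normalized $T^{*}\psi$. One cosmetic remark: you invoke the full James characterization $x^{\perp_B}=\bigcup_{f\in J(x)}\ker f$, but only the easy inclusion $\ker f\subseteq x^{\perp_B}$ for $f\in J(x)$ is actually needed for your argument.
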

\begin{proof}
 Let $Tx$ be smooth. Then there exists $\psi\in S_{\mathbb{Y}^*}$ such that $J(Tx)=\{\psi\}.$ If possible suppose that $x$ is not smooth. Then there exist two linearly independent $f$ and $g$ in $J(x)$. Therefore, there exists $y \in \mathbb{X}$ such that $y \in \ker f \setminus \ker g$. Then $\mathbb{X}=\spn\,\{y\}\oplus\ker g$ and so there exist $\alpha(\neq0)\in \mathbb{R}$ and $h(\neq 0)\in \ker g$ such that $x=\alpha y+h.$  Since $T$ preserves Birkhoff-James orthogonality at $x$, it follows that $T(x^{\perp_B})\subset (Tx)^{\perp_B}$ and so $T(\ker f \cup \ker g)\subset \ker \psi.$ This implies that $\psi(Tx)=\psi(\alpha Ty+Th)=0.$ This contradicts the fact that $J(Tx)=\{\psi\}.$ Therefore, $x$ is smooth.
\end{proof}

	Now we aim to demonstrate that in a  Banach space, any operator that preserves Birkhoff-James orthogonality on a given set necessarily preserves this orthogonality at every smooth point within the closure of that set. 
	\begin{theorem}
    \label{sm pre}
		Let $A$ be a subset of $\mathbb{X}.$  If $T\in \mathbb{L}(\mathbb{X},\mathbb{Y})$ preserves Birkhoff-James orthogonality at each point of $A,$ then it preserves the same at each point of $\overline{A}\cap \sm\mathbb{X}.$
	\end{theorem}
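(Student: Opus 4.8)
The plan is to fix $x\in\overline{A}\cap\sm\mathbb{X}$ and $v\in\mathbb{X}$ with $x\perp_B v$, and to show $Tx\perp_B Tv$; one may assume $v\neq 0$, since otherwise there is nothing to prove. Pick a sequence $(x_n)$ in $A$ with $x_n\to x$, and note that since $x\neq 0$ we may assume $x_n\neq 0$ for all $n$. The strategy is to replace $v$ by a sequence $(v_n)$ with $v_n\to v$ and $x_n\perp_B v_n$, to apply the hypothesis at each $x_n$ to get $Tx_n\perp_B Tv_n$, and then to pass to the limit. This last step will rely only on the elementary fact that Birkhoff--James orthogonality is closed under norm limits: if $a_n\to a$, $b_n\to b$ and $a_n\perp_B b_n$ for all $n$, then for every scalar $\lambda$ we have $\|a+\lambda b\|=\lim_n\|a_n+\lambda b_n\|\geq\lim_n\|a_n\|=\|a\|$, so $a\perp_B b$.

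The construction of $(v_n)$ is the core of the argument, and it is here that the smoothness of $x$ is essential. Write $J(x)=\{f\}$ and, for each $n$, choose $g_n\in J(x_n)\subseteq S_{\mathbb{X}^*}$. First I would show that $g_n\to f$ in the weak-$*$ topology. Since $(g_n)$ lies in the weak-$*$ compact ball $B_{\mathbb{X}^*}$, it suffices to verify that every weak-$*$ cluster point of $(g_n)$ equals $f$: if $g_{n_\alpha}\to g$ along a subnet, then from $g_{n_\alpha}(x_{n_\alpha})=\|x_{n_\alpha}\|$ and $|g_{n_\alpha}(x)-g_{n_\alpha}(x_{n_\alpha})|\leq\|x-x_{n_\alpha}\|\to 0$ one gets $g(x)=\lim_\alpha g_{n_\alpha}(x)=\|x\|$, while $\|g\|\leq 1$; hence $g\in J(x)=\{f\}$. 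Now fix $u_0\in\mathbb{X}$ with $f(u_0)=1$. Since $g_n(u_0)\to 1$, we have $g_n(u_0)\neq 0$ for all large $n$, and for such $n$ I would set
\[
v_n:=v-\frac{g_n(v)}{g_n(u_0)}\,u_0 .
\]
Then $g_n(v_n)=0$, so $\|x_n+\lambda v_n\|\geq g_n(x_n+\lambda v_n)=g_n(x_n)=\|x_n\|$ for every scalar $\lambda$, i.e. $x_n\perp_B v_n$; moreover, because $x$ is smooth, $x\perp_B v$ forces $f(v)=0$, whence $g_n(v)/g_n(u_0)\to f(v)/f(u_0)=0$ and $v_n\to v$.

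It then remains only to invoke the hypothesis: $T$ preserves Birkhoff--James orthogonality at each $x_n$, so $Tx_n\perp_B Tv_n$ for all large $n$; since $T$ is bounded, $Tx_n\to Tx$ and $Tv_n\to Tv$, and the limit observation from the first paragraph gives $Tx\perp_B Tv$, as desired. I expect the main obstacle to be the weak-$*$ convergence $g_n\to f$: this is precisely the step that fails when $x$ is not smooth (then $J(x)$ is not a singleton and the $g_n$ need not converge), and in the general, possibly non-separable, setting it must be handled via cluster points of the net as above rather than by extracting subsequences. If one prefers to avoid nets, the same conclusion can be obtained by restricting the functionals $g_n$ to the closed separable subspace spanned by $x$, $v$, $u_0$ and all the $x_n$, where the weak-$*$ topology on the dual ball is metrizable, and then passing to a weak-$*$ convergent subsequence; the rest of the argument is routine.
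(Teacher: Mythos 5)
Your proof is correct and takes essentially the same route as the paper's: approximate $x$ by $x_n\in A$, send supporting functionals $g_n\in J(x_n)$ to the weak-$*$ limit $f$, which is the unique supporting functional at the smooth point $x$, perturb $v$ to $v_n\in\ker g_n$ with $v_n\to v$ so that $x_n\perp_B v_n$, and pass to the limit in $\|Tx_n+\lambda Tv_n\|\geq\|Tx_n\|$. The only cosmetic difference is that you establish weak-$*$ convergence of the whole sequence $(g_n)$ and correct $v$ along a fixed $u_0$ with $f(u_0)=1$, whereas the paper works with a weak-$*$ convergent subnet and corrects along a point chosen outside all the kernels; the substance is identical.
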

	\begin{proof}
	     Let  $T\in \mathbb{L}(\mathbb{X},\mathbb{Y})$ preserve Birkhoff-James orthogonality at each point of $A.$ If $\overline{A}\cap \sm\mathbb{X}=\emptyset$ then we are done. Let $\overline{A}\cap \sm\mathbb{X}\neq\emptyset.$ Let $x\in \overline{A}\cap \sm\mathbb{X}.$ Then  there exists a sequence $\{x_n\}\subset A$ such that $x_n\longrightarrow x.$ Let for each $n\in \mathbb{N},$ $f_n\in J(x_n).$ Since $B_{\mathbb{X}^*}$ is weak* compact, there exists a weak* convergent subnet $\{f_{\alpha}\}_{\alpha \in \Lambda}$ of $\{f_n\}.$  Suppose that $f_{\alpha}\overset{w*}{\longrightarrow} f\in B_{\mathbb{X}^*}.$ Consider the subnet $\{x_{\alpha}\}$ of $\{x_n\},$  then $x_{\alpha}\longrightarrow x.$  Thus, 
		$f_{\alpha}(x_{\alpha})\longrightarrow f(x).$ 
		Since $f_{\alpha}(x_{\alpha})=\|x_{\alpha}\|$ for each $\alpha\in \Lambda,$ it follows that $f(x)=\|x\|.$ This implies that $J(x)=\{f\}.$ Now we show that $T$ preserves Birkhoff-James orthogonality at $x.$  Let $x\perp_{B}z$ and so $z\in \ker f.$ We claim that there exist $z_{\alpha}\in \ker f_{\alpha}$ for all $\alpha\in \Lambda$ such that $z_{\alpha}\longrightarrow z.$  Let \[y\in\mathbb{X}\setminus\Big (\big(\bigcup\limits_{\alpha\in \Lambda} \ker f_{\alpha}\big)\bigcup \ker f\Big ).\]
        Then for each $\alpha\in \Lambda,$ there exists $r_{\alpha}\in \mathbb{R}$ such that $f_{\alpha}(z)=r_{\alpha}f_{\alpha}(y).$ Then \[r_{\alpha}f_{\alpha}(y)=f_{\alpha}(z)\longrightarrow f(z)= 0.\] Since $f_{\alpha}(y)\longrightarrow f(y)\neq 0,$ it follows that $r_{\alpha}\longrightarrow 0.$ For each $\alpha\in \Lambda,$ consider $z_{\alpha}=z-r_{\alpha} y.$  Then for each  $\alpha \in \Lambda,$ $z_{\alpha}\in \ker f_{\alpha}$ and $z_{\alpha}\longrightarrow z.$ Thus, our claim is established. 
         Now, $Tx_{\alpha}\perp_{B}Tz_{\alpha}$ for all $\alpha\in \Lambda.$ Then for each $\alpha\in \Lambda,$
		\[ \|Tx_{\alpha}+\lambda Tz_{\alpha}\|\geq \|Tx_{\alpha}\|\text{ for all }\lambda\in \mathbb{R}.\] This implies that
		\[ \|Tx+\lambda Tz\|\geq \|Tx\|\text{ for all }\lambda\in \mathbb{R}.\]
		So $Tx\perp_{B} Tz.$ Thus,  $T$ preserves Birkhoff-James orthogonality at $x$ and this completes the proof.
	\end{proof}

Observe that the Birkhoff-James orthogonality preserving set $P_{B}=\{x\in \mathbb{X}: x\perp_{B}y\implies Tx\perp_{B} Ty\}$ for an operator $T$ may not be closed in $\mathbb{X}.$ For instance, take  $\mathbb{X}=\ell_{\infty}^2$ and define the operator $T(x,y)=(2x,y).$ Then $P_B$ is not closed in $\mathbb{X}.$ However, using Theorem \ref{closed} we can show that the set $P_{B}$  is closed with respect to the subspace topology of $\sm\mathbb{X}.$

    \begin{cor}\label{closed}
        Let $T\in \mathbb{L}(\mathbb{X},\mathbb{Y}).$  Then $P_{B}\cap \sm\mathbb{X}$ is closed in $\sm\mathbb{X}.$
    \end{cor}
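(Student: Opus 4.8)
The plan is to derive Corollary \ref{closed} as a direct consequence of Theorem \ref{sm pre}. Recall that $P_B$ is, by definition, precisely the set of points at which $T$ preserves Birkhoff-James orthogonality. So the statement to prove is that $P_B \cap \sm\mathbb{X}$ is closed in the subspace topology of $\sm\mathbb{X}$; equivalently, if $x \in \sm\mathbb{X}$ lies in the closure (in $\mathbb{X}$) of $P_B \cap \sm\mathbb{X}$, then $x \in P_B$.

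First I would set $A = P_B \cap \sm\mathbb{X}$, which is a subset of $\mathbb{X}$, and observe that by the very definition of $P_B$, the operator $T$ preserves Birkhoff-James orthogonality at each point of $A$. Applying Theorem \ref{sm pre} to this set $A$, we conclude that $T$ preserves Birkhoff-James orthogonality at each point of $\overline{A} \cap \sm\mathbb{X}$, i.e., $\overline{A} \cap \sm\mathbb{X} \subseteq P_B$. Since also $\overline{A} \cap \sm\mathbb{X} \subseteq \sm\mathbb{X}$ trivially, we get $\overline{A} \cap \sm\mathbb{X} \subseteq P_B \cap \sm\mathbb{X} = A$. Combined with the obvious inclusion $A \subseteq \overline{A} \cap \sm\mathbb{X}$ (every point of $A$ is in $\sm\mathbb{X}$ and in its own closure), this yields $\overline{A} \cap \sm\mathbb{X} = A$.

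The final step is to translate this set equality into the topological statement. The closure of $A$ in the subspace topology of $\sm\mathbb{X}$ is exactly $\overline{A} \cap \sm\mathbb{X}$, where $\overline{A}$ denotes the closure in $\mathbb{X}$. Thus $A$ equals its own closure in $\sm\mathbb{X}$, which is precisely the assertion that $A = P_B \cap \sm\mathbb{X}$ is closed in $\sm\mathbb{X}$.

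There is essentially no obstacle here: the corollary is a formal repackaging of Theorem \ref{sm pre}. The only point requiring a moment's care is the identification of the subspace-closure of $A$ in $\sm\mathbb{X}$ with $\overline{A} \cap \sm\mathbb{X}$, which is a standard fact about subspace topology, and the observation that the hypothesis of Theorem \ref{sm pre} is automatically satisfied by $A = P_B \cap \sm\mathbb{X}$ because membership in $P_B$ is exactly the orthogonality-preservation property at that point.
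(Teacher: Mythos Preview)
Your proof is correct and follows essentially the same approach as the paper: apply Theorem~\ref{sm pre} and read off the containment $\overline{A}\cap\sm\mathbb{X}\subseteq P_B$, then conclude closedness in the subspace topology. The only cosmetic difference is that the paper feeds $A=P_B$ itself (rather than $A=P_B\cap\sm\mathbb{X}$) into Theorem~\ref{sm pre}, obtaining the marginally stronger identity $\overline{P_B}\cap\sm\mathbb{X}=P_B\cap\sm\mathbb{X}$, but this changes nothing substantive.
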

    \begin{proof}
       Since $T$ preserves Birkhoff-James orthogonality at each point of $P_{B},$  it follows from Theorem \ref{closed} that $T$ preserves the same at each point of $\overline{P_{B}}\cap \sm\mathbb{X}.$ This implies that $\overline{P_{B}}\cap \sm\mathbb{X}\subset P_{B}.$ So,
       \[P_{B}\cap \sm\mathbb{X}\subset \overline{P_{B}}\cap \sm\mathbb{X}\subset P_{B}\cap \sm\mathbb{X}.\]
       Therefore, $P_{B}\cap \sm\mathbb{X}= \overline{P_{B}}\cap \sm\mathbb{X}$ and this completes the proof.
    \end{proof}

	Now, we focus on studying the $\mathcal{K}$-sets in Banach spaces  in which the set of  all smooth points forms a dense $G_{\delta}$ set. For this, we need the following lemma.  
	\begin{lemma}\label{sm dense}
		Let  $T\in \mathbb{L}(\mathbb X,\mathbb{Y})$ be anon-zero operator. Then the following results hold:
        \begin{itemize}
            \item[(i)] If $\sm\mathbb{X}$ is dense in $\mathbb{X}$ then $\sm\mathbb{X}\setminus\ker T $ is also dense in $\mathbb{X}.$
            \item[(ii)] If $\sm\mathbb{X}$ and $\sm\mathbb{Y}$ are  dense $G_{\delta}$ subsets of $\mathbb{X}$ and $\mathbb{Y},$ respectively and $T$ is bijective then $\sm\mathbb{X}\cap T^{-1}(\sm\mathbb{Y})$ is dense in $\mathbb{X}.$ 
        \end{itemize}
	\end{lemma}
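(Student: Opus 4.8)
The plan is to handle the two parts separately, reducing each to standard topological facts about dense subsets of a Banach space. For (i), the key observation is that $\ker T$ is a proper closed subspace of $\mathbb{X}$: it is closed because $T$ is bounded, and proper because $T$ is non-zero. A proper subspace of a normed space can contain no open ball, since a ball inside a subspace would, by the subspace being closed under scaling and translation by its own elements, force the subspace to be the whole space; hence $\mathbb{X}\setminus\ker T$ is an open \emph{and} dense subset of $\mathbb{X}$. I would then use the elementary fact that the intersection of a dense set with an open dense set is dense: given any non-empty open $V\subseteq\mathbb{X}$, the set $V\cap(\mathbb{X}\setminus\ker T)$ is non-empty and open, so it meets the dense set $\sm\mathbb{X}$. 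Since $\sm\mathbb{X}\setminus\ker T=\sm\mathbb{X}\cap(\mathbb{X}\setminus\ker T)$, this shows $\sm\mathbb{X}\setminus\ker T$ meets every non-empty open set, i.e.\ it is dense in $\mathbb{X}$.

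For (ii), I would first upgrade bijectivity of $T$ to the statement that $T$ is a homeomorphism: since $\mathbb{X}$ and $\mathbb{Y}$ are Banach spaces and $T$ is a bounded linear bijection, the bounded inverse theorem (a consequence of the open mapping theorem) guarantees that $T^{-1}$ is bounded, so $T$ is both continuous and open. Writing $\sm\mathbb{Y}=\bigcap_{n}U_n$ with each $U_n$ open and dense in $\mathbb{Y}$, each $T^{-1}(U_n)$ is open by continuity of $T$ and dense in $\mathbb{X}$ because $T$ is open: for non-empty open $V\subseteq\mathbb{X}$, $T(V)$ is non-empty open and thus meets $U_n$, so $V$ meets $T^{-1}(U_n)$. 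Hence $T^{-1}(\sm\mathbb{Y})=\bigcap_n T^{-1}(U_n)$ is a dense $G_\delta$ subset of $\mathbb{X}$. Since $\sm\mathbb{X}$ is also a dense $G_\delta$ in $\mathbb{X}$ and $\mathbb{X}$, being a complete metric space, is a Baire space, the set $\sm\mathbb{X}\cap T^{-1}(\sm\mathbb{Y})$ is again a countable intersection of open dense sets; its density is precisely the Baire category theorem.

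Neither part presents a serious difficulty; the points that require a little care are the justification that a proper closed subspace has empty interior (used in (i) to get density of $\mathbb{X}\setminus\ker T$), and the invocation of the open mapping theorem in (ii). The latter is really the crux: without bijectivity, $T$ need not be an open map, and then preimages of dense sets need not be dense (for instance, a non-injective projection collapses a dense set to a small one), so the hypothesis that $T$ is bijective is genuinely used there, and completeness of $\mathbb{X}$ is genuinely used for the final Baire argument.
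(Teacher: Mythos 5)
Your proposal is correct and follows essentially the same route as the paper: part (i) is the same elementary density argument using that $\ker T$ is a proper closed subspace (the paper phrases it via the inclusion $\overline{\sm\mathbb{X}}\setminus\ker T\subset\overline{\sm\mathbb{X}\setminus\ker T}$), and part (ii) is the same Baire category argument after upgrading $T$ to a homeomorphism, with the paper working with first-category complements rather than your equivalent formulation via countable intersections of open dense sets.
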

	\begin{proof}
		(i) Let $\sm\mathbb{X}$ be dense in $\mathbb{X}.$ Then
		
		\[\mathbb{X}\setminus \ker T=\overline{\sm\mathbb{X}}\setminus\ker T\subset \overline{\sm\mathbb{X}\setminus\ker T}.\]
		Thus, $\sm\mathbb{X}\setminus\ker T$ is dense in $\mathbb{X}\setminus \ker T.$ Again $\mathbb{X}\setminus \ker T$ is dense in $\mathbb{X}.$ Therefore, $\sm\mathbb{X}\setminus\ker T $ is dense in $\mathbb{X}.$\\

        (ii)  Let $\sm\mathbb{X}$ and $\sm\mathbb{Y}$ be  dense $G_{\delta}$ subsets of $\mathbb{X}$ and $\mathbb{Y},$ respectively. Then  $\mathbb{Y}\setminus\sm\mathbb{Y}$ is of the first category in $\mathbb{Y}.$ Suppose $T$ is bijective.  Then $T$ is a homeomorphism and therefore,  $T^{-1}(\mathbb{Y}\setminus \sm\mathbb{Y})=\mathbb{X}\setminus T^{-1}(\sm\mathbb{Y})$ is of the first category in $\mathbb{X}.$ Again   $\mathbb{X}\setminus\sm\mathbb{X}$ is of the first category in $\mathbb{X}.$ Now, 
		\[\mathbb{X}\setminus (\sm\mathbb{X}\cap T^{-1}(\sm\mathbb{Y}))=(\mathbb{X}\setminus \sm\mathbb{X})\cup \mathbb{X}\setminus T^{-1}(\sm\mathbb{Y}).\]
		Thus, $\mathbb{X}\setminus (\sm\mathbb{X}\cap T^{-1}(\sm\mathbb{Y}))$  is of the first category  in $\mathbb{X}.$ Therefore, $\sm\mathbb{X}\cap T^{-1}(\sm\mathbb{Y})$ is dense in $\mathbb{X}.$
	\end{proof}

	In the following theorem, we provide a sufficient condition for an operator to be injective.
	\begin{theorem}\label{sm inje}
		Let $\mathbb{X}$ be a Banach space such that $\sm\mathbb{X}$ is dense in $\mathbb{X}.$ If a non-zero operator $T\in \mathbb{L}(\mathbb X,\mathbb{Y})$ preserves  Birkhoff-James orthogonality at each $x\in \sm\mathbb{X}$ then $T$ is injective.
	\end{theorem}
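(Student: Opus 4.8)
The plan is to show that $\ker T$ cannot contain any nonzero vector by working with smooth points. Suppose for contradiction that $z \neq 0$ with $Tz = 0$. The goal is to produce a smooth point $x$ such that $x \perp_B z$, because then by hypothesis $Tx \perp_B Tz = 0$; but $Tx \perp_B 0$ is vacuous and gives no contradiction directly — so the real idea must be the reverse: find a smooth point $x$ with $z \perp_B x$ is also not quite it. Instead, the mechanism should be: if $x$ is smooth with $J(x) = \{f\}$, then $x^{\perp_B} = \ker f$, a hyperplane. If $Tx \neq 0$ and $T$ preserves orthogonality at $x$, then $T(\ker f) \subseteq (Tx)^{\perp_B}$, and $(Tx)^{\perp_B}$ is a proper subset of $\mathbb{Y}$ that contains no line through a point where... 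Actually the cleanest route: $T(\ker f)\subseteq (Tx)^{\perp_B}$ forces $\ker f \subseteq \ker(\phi\circ T)$ for every $\phi\in J(Tx)$, hence $\phi\circ T$ is a scalar multiple of $f$; in particular $\ker T \subseteq \ker f$ whenever $Tx\neq 0$. So $\ker T$ is contained in $\ker f$ for every smooth point $x\notin\ker T$.

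**Next I would** use Lemma \ref{sm dense}(i): since $\sm\mathbb{X}$ is dense and $T\neq 0$, the set $\sm\mathbb{X}\setminus\ker T$ is dense in $\mathbb{X}$. For each such $x$ with supporting functional $f_x$, we have shown $\ker T\subseteq\ker f_x$. If $z\in\ker T$ is nonzero, then $f_x(z)=0$ for every $x\in\sm\mathbb{X}\setminus\ker T$. I would then pick a functional $g\in S_{\mathbb{X}^*}$ with $g(z)=\|z\|\neq 0$ (Hahn–Banach), and a point $w\in\mathbb{X}$ with $g(w)=\|w\|$; by density choose $x\in\sm\mathbb{X}\setminus\ker T$ close to $w$ (or argue on the dense set directly) — the aim is to locate a smooth point whose supporting functional does not annihilate $z$, contradicting $f_x(z)=0$. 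Concretely, the set $\{h\in S_{\mathbb{X}^*} : h(z)=0\}$ is a proper weak* closed subset; the union of $J(x)$ over smooth $x$ in a dense set must be weak*-dense enough in $S_{\mathbb{X}^*}$ to escape it, since for any $v\in S_{\mathbb{X}}$ with $v$ smooth, $J(v)$ norms $v$, and smooth points are dense so their functionals separate points of $\mathbb{X}$.

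**The key steps, in order:** (1) For a smooth $x\notin\ker T$ with $J(x)=\{f\}$, show $T(x^{\perp_B})=T(\ker f)\subseteq(Tx)^{\perp_B}$ using that $T$ preserves orthogonality at $x$, and deduce $\ker T\subseteq\ker f$ (because $Tx\neq 0$ means $(Tx)^{\perp_B}$ is a proper subset, in fact contained in $\ker\phi$ for any $\phi\in J(Tx)$, so $\phi\circ T$ vanishes on $\ker f$, forcing $\phi\circ T=\lambda f$ for some scalar $\lambda$, and $\lambda\neq0$ since $\phi(Tx)=\|Tx\|\neq0$; hence $\ker T\subseteq\ker(\phi\circ T)=\ker f$). (2) Invoke Lemma \ref{sm dense}(i) to get density of $\sm\mathbb{X}\setminus\ker T$. (3) Suppose $0\neq z\in\ker T$; then $f_x(z)=0$ for all $x$ in this dense set. (4) Derive a contradiction: since smooth points are dense, $\{x\in\sm\mathbb{X}\setminus\ker T\}$ is dense, and the map $x\mapsto J(x)$ is (norm-weak*) upper semicontinuous / the functionals $f_x$ attain the norm at $x$; pick any $x_0$ in the dense set with $\|x_0 - z\|$ small relative to $\|z\|$, so $f_{x_0}(x_0)=\|x_0\|>0$ while $f_{x_0}(z)=0$ forces $\|x_0-z\|\geq|f_{x_0}(x_0-z)|=\|x_0\|$, impossible once $x_0$ is close enough to $z$. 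Hence $\ker T=\{0\}$ and $T$ is injective.

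**The main obstacle** I anticipate is step (1) — cleanly extracting $\ker T\subseteq\ker f$ from the orthogonality-preservation hypothesis. One must be careful that "preserves Birkhoff–James orthogonality at $x$" only gives $T(x^{\perp_B})\subseteq(Tx)^{\perp_B}$, and then translate this containment of subspaces/cones into a statement about functionals: $x^{\perp_B}=\ker f$ is a genuine hyperplane (this uses smoothness of $x$), and $(Tx)^{\perp_B}\subseteq\ker\phi$ for each $\phi\in J(Tx)$ by definition of Birkhoff–James orthogonality, so $\ker f\subseteq\ker(\phi\circ T)$, and since $Tx\neq 0$ the functional $\phi\circ T$ is nonzero; two functionals with nested kernels where the containing one is a hyperplane must be proportional, giving $\ker(\phi\circ T)=\ker f$. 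Everything else is a routine density-plus-Hahn–Banach argument, but this proportionality extraction is the crux; Proposition \ref{inverse image} is morally the same idea and can be cited or mirrored.
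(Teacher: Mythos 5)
Your strategy is workable, but the justification of your key step (1) contains a genuine error. You assert that $(Tx)^{\perp_B}\subseteq\ker\phi$ for \emph{every} $\phi\in J(Tx)$ ``by definition of Birkhoff--James orthogonality.'' This is false unless $Tx$ happens to be smooth: by James' characterization, $Tx\perp_B v$ means only that $\phi(v)=0$ for \emph{some} $\phi\in J(Tx)$, so $(Tx)^{\perp_B}=\bigcup_{\phi\in J(Tx)}\ker\phi$, a union of kernels rather than an intersection. For instance in $\ell_\infty^2$, with $Tx=(1,1)$ and $\phi=(1,0)\in J(Tx)$ one has $(1,-1)\in (Tx)^{\perp_B}$ but $\phi(1,-1)\neq 0$. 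You cannot assume $Tx$ is smooth (nothing in the hypothesis gives that; Proposition \ref{inverse image} goes in the opposite direction), so the deduction $\phi\circ T=\lambda f$ does not follow as written. The gap is repairable: $T(\ker f)$ is a \emph{subspace} contained in $(Tx)^{\perp_B}$, so Lemma \ref{subspace} yields a single $\phi\in J(Tx)$ with $T(\ker f)\subseteq\ker\phi$, and one such $\phi$ is all your proportionality argument needs.

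In fact you can bypass supporting functionals on the range side altogether, which is what the paper does. If $z\in\ker T\setminus\{0\}$ and $x\in\sm\mathbb{X}\setminus\ker T$ with $J(x)=\{f\}$ and $f(z)\neq 0$, choose $\alpha=-f(z)/\|x\|\neq 0$; then $f(\alpha x+z)=0$, hence $x\perp_B(\alpha x+z)$, and preservation at $x$ together with $Tz=0$ gives $Tx\perp_B\alpha Tx$, forcing $Tx=0$ and contradicting $x\notin\ker T$. This establishes $f_x(z)=0$ for every smooth $x$ off the kernel, which is exactly your step (3); your step (4) estimate (a quantitative form of the paper's limiting argument, both resting on Lemma \ref{sm dense}(i)) then finishes correctly, and the Hahn--Banach detour in your middle paragraph is unnecessary. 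So, apart from the faulty justification in step (1), your route and the paper's coincide up to reorganization: the paper first uses density to produce a smooth $x\notin\ker T$ with $x\not\perp_B z$ and only then applies preservation at that single point, whereas you apply preservation at every smooth point off the kernel first and contradict density afterwards.
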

	\begin{proof}
		Let $T(\neq 0)\in \mathbb{L}(\mathbb X,\mathbb{Y})$  preserve Birkhoff-James orthogonality at each $x\in \sm\mathbb{X}.$ We show that $T $ is injective. On the contrary suppose that $\ker T\neq \emptyset.$ Suppose that $z(\neq 0)\in \ker T.$ We claim that there exists $x\in \sm\mathbb{X}\setminus \ker T $ such that $x\not \perp_{B} z.$ On the contrary to our claim suppose that  for all $x\in \sm\mathbb{X}\setminus \ker T,$ $x\perp_{B}z.$ From  Lemma \ref{sm dense} (i), it follows that there exists a sequence $\{z_n\}\subset \sm\mathbb{X}\setminus\ker T$ such that $z_n\longrightarrow z.$ Then for all $n\in \mathbb{N},$ $z_n\perp_{B}z$ and so $\|z_n+\lambda z\|\geq \|z_n\| $ for all $\lambda\in \mathbb{R}.$ Taking $n\longrightarrow \infty,$ we have $\|z+\lambda z\|\geq \|z\| $ for all $\lambda\in \mathbb{R}.$ This implies that $z=0,$ a contradiction. Thus, our claim is established. Let $x\in \sm\mathbb{X}\setminus \ker T $  be such that $x\not \perp_{B} z.$ Then there exists $\alpha\in\mathbb{R}\setminus\{0\}$ such that $x\perp_{B}\alpha x +z.$ This implies that $Tx\perp_{B}\alpha Tx,$ which is a contradiction. Therefore, $T$ is injective.
	\end{proof}

    Now, we recall some important properties regarding the norm derivatives. The proofs can be found in \cite{AST09}.
	\begin{proposition}
		Given $u,v\in\mathbb{X}$ and any $\alpha\in \mathbb{R},$ the followings are hold:
		\begin{itemize}
			\item[(i)] $ \rho'_{\pm}(\alpha u, v)=\rho'_{\pm}( u, \alpha v)=\begin{cases}
				\alpha \rho'_{\pm}( u, v), &\text{ if }\alpha\geq 0\\
				\alpha \rho'_{\mp}( u, v), &\text{ if }\alpha< 0.
			\end{cases}$\\ In particular, it follows that $\rho'(\alpha u, v)=\alpha \rho'( u, v)=\rho'\text{ for all } \alpha\in \mathbb{R}.$
			\item[(ii)] $ \rho'_{\pm}( u,\alpha u +v)= \alpha \|u\|^2+ \rho'_{\pm}( u, v).$
			\item[(iii)] $ \rho'_+( u, v)= \|u\|\sup\big\{f(v):f\in J(u)\big\}.$
			\item[(iv)] $ \rho'_-( u, v)= \|u\|\inf\big\{f(v):f\in J(u)\big\}.$
			\item[(v)] $ \rho'_-(u,v)\leq \rho'_+(u,v).$ Moreover, $\mathbb{X}$ is smooth if and only if $ \rho'_-(u,v)= \rho'_+(u,v),$ for all $u,v\in\mathbb{X}.$
			\item[(vi)]  $u\perp_B v\iff \rho'_-(u,v)\leq 0 \leq \rho'_+(u,v).$ Moreover, either of the conditions $u\perp_{\rho_+}v$ or $u\perp_{\rho_-}v$ implies that $u\perp_B v.$
		\end{itemize}
	\end{proposition}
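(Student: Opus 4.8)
The plan is to reduce all six items to two elementary facts about the scalar convex function $\phi(t)=\|u+tv\|$: that its difference quotient $t\mapsto \frac{\phi(t)-\phi(0)}{t}$ is nondecreasing on $\mathbb{R}\setminus\{0\}$, and that supporting functionals at $u+tv$ can be tracked as $t\to 0^{\pm}$. Convexity of $\phi$ is immediate from the triangle inequality and homogeneity of the norm, and it yields at once that $\lim_{t\to 0^+}\frac{\phi(t)-\phi(0)}{t}=\inf_{t>0}\frac{\phi(t)-\phi(0)}{t}$ and $\lim_{t\to 0^-}\frac{\phi(t)-\phi(0)}{t}=\sup_{t<0}\frac{\phi(t)-\phi(0)}{t}$ both exist, the left one being $\le$ the right one. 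Writing $\tau_{\pm}$ for these one-sided derivatives at $0$, so that $\rho'_{\pm}(u,v)=\|u\|\tau_{\pm}$, the bound $\tau_-\le\tau_+$ is exactly the first assertion of (v). (These identities are classical; see \cite{AST09}.)

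For (i) and (ii) I would only change variables inside the defining limits. For (i): if $\alpha>0$ then $\|\alpha u+tv\|=\alpha\|u+(t/\alpha)v\|$ and the substitution $s=t/\alpha$ preserves the sign of $t$, giving $\rho'_{\pm}(\alpha u,v)=\alpha\rho'_{\pm}(u,v)$; if $\alpha<0$ the same substitution reverses the sign of $t$, which is precisely why $\rho'_{\pm}(\alpha u,v)=\alpha\rho'_{\mp}(u,v)$; homogeneity in the second slot is the identical computation with $s=\alpha t$, and averaging the two one-sided identities gives $\rho'(\alpha u,v)=\alpha\rho'(u,v)=\rho'(u,\alpha v)$ for every $\alpha\in\mathbb{R}$. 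For (ii): for $|t|$ small enough $1+t\alpha>0$, so $\|u+t(\alpha u+v)\|=(1+t\alpha)\bigl\|u+\tfrac{t}{1+t\alpha}v\bigr\|$; rewriting $\frac{(1+t\alpha)\|u+\frac{t}{1+t\alpha}v\|-\|u\|}{t}$ as $\frac{\|u+\frac{t}{1+t\alpha}v\|-\|u\|}{t/(1+t\alpha)}+\alpha\|u\|$ and letting $t\to 0^{\pm}$ (noting $\tfrac{t}{1+t\alpha}\to 0^{\pm}$) gives $\rho'_{\pm}(u,\alpha u+v)=\rho'_{\pm}(u,v)+\alpha\|u\|^2$.

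The substantive identities are (iii) and (iv). One inequality is free: for any $f\in J(u)$ we have $\|u+tv\|\ge f(u+tv)=\|u\|+tf(v)$, so dividing by $t>0$ and letting $t\to 0^+$ gives $\tau_+\ge f(v)$, hence $\tau_+\ge\sup\{f(v):f\in J(u)\}$, while dividing by $t<0$ yields the companion bound $\tau_-\le\inf\{f(v):f\in J(u)\}$. For the reverse inequalities I would reuse the subnet argument of Theorem \ref{sm pre}: pick $t_n\to 0^+$, pick $f_n\in J(u+t_nv)$, and extract a weak$^*$ convergent subnet $f_{n_\alpha}\overset{w*}{\longrightarrow}f\in B_{\mathbb{X}^*}$; continuity of evaluation forces $f(u)=\lim f_{n_\alpha}(u+t_{n_\alpha}v)=\lim\|u+t_{n_\alpha}v\|=\|u\|$, so $f\in J(u)$, and from $\|u+t_nv\|=f_n(u)+t_nf_n(v)\le\|u\|+t_nf_n(v)$ one gets $\frac{\|u+t_nv\|-\|u\|}{t_n}\le f_n(v)$, whence $\tau_+\le f(v)\le\sup\{g(v):g\in J(u)\}$; running the same argument with $t_n\to 0^-$ gives the matching lower bound for $\tau_-$. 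This establishes (iii) and (iv).

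Then (v) and (vi) follow by assembling the pieces. For the second half of (v): $\mathbb{X}$ is smooth iff $J(u)$ is a singleton for every $u\ne 0$, which by (iii)--(iv) is equivalent to $\sup\{f(v):f\in J(u)\}=\inf\{f(v):f\in J(u)\}$ for all $u,v$, i.e.\ to $\rho'_+(u,v)=\rho'_-(u,v)$ (the case $u=0$ being trivial), and non-smoothness is witnessed by distinct $f,g\in J(u)$ together with a $v$ separating $f(v)$ from $g(v)$. For (vi): $u\perp_B v$ says exactly that the convex function $\phi$ attains its global minimum at $0$, which for a convex function happens iff $\phi'_-(0)\le 0\le\phi'_+(0)$, i.e.\ $\rho'_-(u,v)\le 0\le\rho'_+(u,v)$; and if $\rho'_+(u,v)=0$ then $\rho'_-(u,v)\le 0\le\rho'_+(u,v)$ forces $u\perp_B v$, and symmetrically when $\rho'_-(u,v)=0$. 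The only points that need genuine care are the $\pm$ bookkeeping in (i) and the guarantee that the functionals selected in (iii)--(iv) really do converge to something in $J(u)$; both are controlled, respectively, by the change of sign in the substitution and by weak$^*$ compactness of $B_{\mathbb{X}^*}$, so I expect the subnet extraction in (iii)--(iv) to be the only step with any real content.
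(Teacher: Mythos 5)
Your proof is correct. Note, however, that the paper itself does not prove this proposition at all: it is stated as a recollection of known facts, with the proofs delegated to the cited monograph \cite{AST09}. Your argument is essentially the classical one found there: convexity of $t\mapsto\|u+tv\|$ (monotone difference quotients) gives existence of the one-sided derivatives, (i)--(ii) are changes of variable inside the limit with the correct sign bookkeeping, and (iii)--(iv) combine the easy inequality $\|u+tv\|\ge f(u)+tf(v)$ for $f\in J(u)$ with a weak$^*$-compactness extraction of a functional $f_n\in J(u+t_nv)$ whose limit lies in $J(u)$; (v) and (vi) then follow formally. The only points worth a sentence in a written-up version are the trivial case $u=0$ (so that the weak$^*$ limit, which satisfies $\|f\|\le 1$ and $f(u)=\|u\|$, really lies in $S_{\mathbb{X}^*}$ and hence in $J(u)$) and the observation that the subnet of $(t_n)$ still tends to $0$; neither is a gap. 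An alternative route to (iii)--(iv) avoiding subnets is to show directly that $t\mapsto\sup_{f\in J(u+tv)}f(v)$ upper-semicontinuously approaches $\sup_{f\in J(u)}f(v)$, but your argument, which mirrors the subnet technique the paper uses in its Theorem on smooth points, is perfectly fine.
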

	
	We next find a sufficient condition for a bijective operator between Banach spaces in which the set of  all smooth points forms a dense $G_{\delta}$ set to be a scalar multiple of an isometry.
	
	\begin{theorem}\label{sep iso}
		 Let $\sm\mathbb{X}$ and $\sm\mathbb{Y}$ be dense $G_{\delta}$ subsets of $\mathbb{X}$ and $\mathbb{Y},$ respectively. If a bijective operator $T\in \mathbb{L}(\mathbb{X},\mathbb{Y})$ preserves Birkhoff-James orthogonality at each point of $\sm\mathbb{X}\cap T^{-1}(\sm\mathbb{Y})$ then $T$ is a scalar multiple of an isometry.
	\end{theorem}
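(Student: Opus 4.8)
The plan is to reduce the statement to the claim that the ``norm ratio'' $x\mapsto\|Tx\|/\|x\|$ is constant, and to extract that from convexity together with a connectedness argument on the residual set of smooth points.

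\medskip

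\noindent\emph{Reductions.} Since $T$ is a bounded bijection of Banach spaces, the open mapping theorem makes it a linear homeomorphism, and in particular injective (this also follows from Theorem \ref{sm inje}). Put $S:=\sm\mathbb X\cap T^{-1}(\sm\mathbb Y)$, which is dense in $\mathbb X$ by Lemma \ref{sm dense}(ii). Fix $x\in S$; then $x$ and $Tx$ are both smooth, so $J(x)=\{f_x\}$, $J(Tx)=\{g_x\}$, $x^{\perp_B}=\ker f_x$ and $(Tx)^{\perp_B}=\ker g_x$. Preservation of Birkhoff--James orthogonality at $x$ gives $T(\ker f_x)\subseteq\ker g_x$; as $T$ is bijective, $T(\ker f_x)$ is a hyperplane, so $T(\ker f_x)=\ker g_x$ and hence $g_x\circ T=c_x f_x$ for a scalar $c_x$. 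Evaluating at $x$ yields $c_x=\|Tx\|/\|x\|>0$. Equivalently, writing $\psi(\cdot)=\|\cdot\|_{\mathbb X}$ and $\phi(\cdot)=\|T(\cdot)\|_{\mathbb Y}$ (a norm on $\mathbb X$ equivalent to $\psi$), both are G\^ateaux differentiable at each point of $S$ with $\nabla\psi(x)=f_x$ and $\nabla\phi(x)=T^{*}g_x=c_x f_x=c_x\nabla\psi(x)$. One also records, via Theorem \ref{sm pre} and $\overline S=\mathbb X$, that $T$ preserves Birkhoff--James orthogonality at every point of $\sm\mathbb X$, and symmetrically that $T^{-1}$ does so at every point of $\sm\mathbb Y$.

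\medskip

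\noindent\emph{Main step.} It suffices to show that $c(x):=\|Tx\|/\|x\|$ is constant on $S$: then $\|Tx\|=c\|x\|$ on the dense set $S$, hence on all of $\mathbb X$ by continuity of both sides, so $c^{-1}T$ is an isometry. I would prove constancy in two stages. Stage one is a convexity estimate: for $x_0\in S$ the subgradient inequality for the convex $\phi$, together with Euler's identity for the $1$-homogeneous $\phi$, gives $\phi(x)\ge\nabla\phi(x_0)(x)=c_{x_0}f_{x_0}(x)$ for every $x$, with equality at $x=x_0$; likewise $\psi(x)\ge f_{x_0}(x)$ with equality at $x_0$. Inserting $x=y\in S$ and using uniqueness of supporting functionals at smooth points shows that $c$ is constant on every ``$f$-fibre'' $\{x\in S:\ f_x=f\}$ (equivalently, on the relative interior of each face of $B_{\mathbb X}$ meeting $S$), and that $c$-values on overlapping faces are comparable. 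Stage two treats the remaining (``strictly convex'') directions: from $\nabla\phi=c\nabla\psi$ and $\phi=c\psi$ on $S$ one computes $\nabla c(x)=0$ for every $x\in S$, and $c=\phi/\psi$ is locally Lipschitz on $\mathbb X\setminus\{0\}$ (the denominator is bounded below on $S_{\mathbb X}$), so at each point of $S$ G\^ateaux differentiability upgrades to Hadamard differentiability. Since $S$ is a dense $G_\delta$ subset of a Banach space of dimension at least $2$ (the one-dimensional case being trivial), any two of its points can be joined by a polygonal arc $\gamma$ lying entirely in $S$; then $t\mapsto c(\gamma(t))$ is Lipschitz with a.e.\ derivative $\nabla c(\gamma(t))\,\gamma'(t)=0$, so it is constant, giving $c$ constant on $S$.

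\medskip

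\noindent\emph{Main obstacle.} The delicate point is Stage two: one must justify that the residual set $S$ is connected by a polygonal (or at least rectifiable) arc inside $S$, and that $c$ is regular enough to be recovered by integrating its derivative along such an arc. An alternative route that avoids this would be to promote ``$T$ preserves Birkhoff--James orthogonality at every point of $\sm\mathbb X$'' to the global statement and invoke the Blanco--Koldobsky--Turn\v sek theorem \cite{BT06,K93}; but this promotion is itself the difficulty, because a supporting functional at a non-smooth point need not be a weak$^{*}$ limit of supporting functionals at nearby smooth points, so the limiting device of Theorem \ref{sm pre} does not apply there. Yet another option is to show $T$ preserves $\rho$-orthogonality globally and appeal to the corresponding norm-derivative characterization; verifying global preservation of $\rho$-orthogonality at non-smooth points encounters the same semicontinuity issues, and that is where the real work sits.
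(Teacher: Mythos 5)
Your reductions and the identity $g_x\circ T=c_xf_x$ with $c_x=\|Tx\|/\|x\|$, together with the observation that all one-sided directional derivatives of $c(x)=\|Tx\|/\|x\|$ vanish at every point of $S$, coincide with the paper's argument (the paper phrases the same fact as $\rho'_+(Tu,Tv)=\frac{\|Tu\|^2}{\|u\|^2}\rho'_+(u,v)$ for $u\in S$ and then differentiates $f(x)=\|Tx\|/\|x\|$). The genuine gap is exactly where you place it: Stage two. The claim that any two points of a dense $G_{\delta}$ subset of a space of dimension at least $2$ can be joined by a polygonal (or rectifiable) arc lying inside the set is false: already in $\mathbb{R}^2$ the complement of the union of all lines with rational coefficients is a dense $G_{\delta}$ containing no non-degenerate segment, hence no polygonal arc. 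In the present setting there is no reason for $S=\sm\mathbb X\cap T^{-1}(\sm\mathbb Y)$ to be better behaved: the non-smooth points can themselves be dense (e.g.\ in $C[0,1]$), so even one segment inside $\sm\mathbb X$, let alone inside $S$, need not exist. An arc that merely meets $S$ in a residual set does not help either, because your integration step needs the derivative to vanish at parameter-a.e.\ point, and residuality gives no measure-theoretic control; recall that a Lipschitz function can have derivative $0$ on a dense open set without being constant (take $x\mapsto\int_0^x\chi_K$ for a fat Cantor set $K$). Stage one does not rescue this: in a smooth strictly convex space every $f$-fibre is a single ray, so constancy on fibres carries no information there.

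For comparison, the paper does not route through any connectivity-inside-$S$ argument at all: having shown that both one-sided derivatives of $f(x)=\|Tx\|/\|x\|$ vanish at each point of the dense set $D=S$, it concludes that $f$ is constant on $D$ and then, using continuity of $f$ on $\mathbb X\setminus\{0\}$ and density of $D$, constant on $\mathbb X\setminus\{0\}$; that is, the passage you singled out as the main obstacle is treated there as following directly from density and continuity rather than from integrating a derivative along paths. Whatever one thinks of how much detail that final inference deserves, the specific mechanism you propose (polygonal arcs inside a residual set) cannot be made to work, so your proof is incomplete at its decisive step; your alternative suggestions (globalizing preservation to non-smooth points and citing Blanco--Koldobsky--Turn\v{s}ek, or global $\rho$-orthogonality preservation) are likewise left unproved and face the semicontinuity issues you yourself note.
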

	\begin{proof}
    Let $T\in \mathbb{L}(\mathbb{X},\mathbb{Y})$ be bijective and preserve Birkhoff-James orthogonality at each  point of $\sm\mathbb{X}\cap T^{-1}(\sm\mathbb{Y}).$ Let $D=\sm\mathbb{X}\cap T^{-1}(\sm\mathbb{Y}).$    Let $u\in D.$ Then there exists $w(\neq 0)\in \mathbb{X}$  such that $u\perp_{B}w.$ Let $v\in \mathbb{X}\setminus\{0\}.$ Then $v= \alpha u+w $ for $\alpha\in \mathbb{R}.$ Since $u$ is smooth, it follows that  $u\perp_{B}w\implies u\perp_{\rho_+}w\implies \rho'_+(u, w)=0.$  
		Therefore, \[\rho'_+(u, v)= \rho'_+(u, \alpha u +w)=\alpha \|u\|^2+ \rho'_+(u, w)=\alpha \|u\|^2.\]
		As $u\in D$, so $Tu$ is smooth. Now,  \[u\perp_{B} w\implies  Tu\perp_{B}Tw\implies Tu\perp_{\rho_+}Tw\implies \rho'_+(Tu, Tw)=0.\]
		So \[\rho'_+(Tu, Tv)= \rho'_+(Tu, \alpha Tu +Tw)=\alpha \|Tu\|^2+ \rho'_+(Tu, Tw)=\alpha \|Tu\|^2.\]
		Therefore, for all $u\in D,v\in\mathbb{X}\setminus\{0\},$ 
		\[\rho'_+( Tu, Tv)=\frac{\|Tu\|^2}{\|u\|^2} \rho'_+( u, v).\] 
		Let us consider the function $f:\mathbb{X}\setminus\{0\}\longrightarrow \mathbb{R}$ defined by
		$f(x)=\frac{\|Tx\|}{\|x\|}.$ Clearly, $f$ is a continuous real-valued function on $\mathbb{X}\setminus\{0\}.$ Let $ x\in D$ and $y\in \mathbb{X}.$ 	Then, 
		\begin{eqnarray*}
			&&\lim\limits_{t\to 0^+}\frac{f(x+ty)-f(x)}{t}\\
			&&~=~\lim\limits_{t\to 0^+}\frac{1}{t}\Bigg(\frac{\|Tx+tTy\|}{\|x+ty\|}-\frac{\|Tx\|}{\|x\|}\Bigg)\\
			&&~=~\lim\limits_{t\to 0^+}\frac{1}{t}\Bigg(\frac{\|Tx+tTy\|\|x\|-\|Tx\|\|x\|-\|Tx\|\|x+ty\|+\|Tx\|\|x\|}{\|x+ty\|\|x\|}\Bigg)\\
			&&~=~\lim\limits_{t\to 0^+}\frac{1}{t}\Bigg\{\frac{\|x\|\big(\|Tx+tTy\|-\|Tx\|\big)-\|Tx\|\big(\|x+ty\|-\|x\|\big)}{\|x+ty\|\|x\|}\Bigg\}\\
			&&~=~\frac{\|Tx\|}{\|x\|}\Bigg\{\frac{\|Tx\|}{\|Tx\|^2}\lim\limits_{t\to 0^+}\frac{\|Tx+tTy\|-\|Tx\|}{t}-\frac{\|x\|}{\|x\|^2}\lim\limits_{t\to 0^+}\frac{\|x+ty\|-\|x\|}{t}\Bigg\}	\\
			&&~=~\frac{\|Tx\|}{\|x\|}\Bigg\{\frac{1}{\|Tx\|^2}\rho'_+( Tx, Ty)-\frac{1}{\|x\|^2}\rho'_+( x, y)\Bigg\}\\
			&&~=~0.
		\end{eqnarray*}
		By a similar calculation, we also have \[\lim\limits_{t\to 0^-}\frac{f(x+ty)-f(x)}{t}=0.\] 
		This shows that $f$ is constant on $D.$ Since $T$ is bijective, it follows from Lemma \ref{sm dense} (ii) that $D$ is dense in $\mathbb{X}.$ Now as $f$ is a continuous real-valued function on  $\mathbb{X}\setminus\{0\},$ so we conclude that $f$ is constant on $\mathbb{X}\setminus\{0\}.$ Therefore, for all $x\in \mathbb{X}\setminus\{0\},$ $\frac{\|Tx\|}{\|x\|}=f(x)=\lambda$ (say). This implies that $\|Tx\|=\lambda\|x\|.$ 
		
	\end{proof}
	
	As a consequence of the above theorem, we obtain a refinement of the Blanco-Koldobsky-Turn\v{s}ek theorem in  Banach spaces in which the set of  all smooth points forms a dense $G_{\delta}$ set.
	\begin{cor}\label{sm k set}
		 Let $\sm\mathbb{X}$ be a  dense $G_{\delta}$ subset of $\mathbb{X}.$ If $T\in \mathbb{L}(\mathbb{X})$ preserves Birkhoff-James orthogonality at each point of $\sm\mathbb{X}$ then $T$ is a scalar multiple of an isometry, i.e., $\sm\mathbb{X}\cap S_{\mathbb{X}}$ is a $\mathcal{K}$-set.
	\end{cor}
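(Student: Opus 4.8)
The plan is to deduce the result from Theorem~\ref{sep iso} with $\mathbb{Y}=\mathbb{X}$. If $T=0$ there is nothing to prove (take $\lambda=0$), so assume $T\neq 0$. Since $\sm\mathbb{X}$ is dense in $\mathbb{X}$ and $T$ preserves Birkhoff--James orthogonality at each point of $\sm\mathbb{X}$, Theorem~\ref{sm inje} already gives that $T$ is injective. Moreover, by homogeneity of $\perp_{B}$ and since $\sm\mathbb{X}$ is a cone, the hypothesis ``$T$ preserves $\perp_{B}$ at each point of $\sm\mathbb{X}$'' is equivalent to ``$T$ preserves $\perp_{B}$ at each point of $\sm\mathbb{X}\cap S_{\mathbb{X}}$'', so the concluding assertion (that $\sm\mathbb{X}\cap S_{\mathbb{X}}$ is a $\mathcal{K}$-set) will be immediate from Definition~\ref{k-set} once $T$ has been shown to be a scalar multiple of an isometry. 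Hence everything reduces to proving $\|Tx\|=\lambda\|x\|$ for all $x$.

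If $T$ were also surjective, then $T$ would be bijective, $\sm\mathbb{Y}=\sm\mathbb{X}$ a dense $G_{\delta}$, and the hypothesis in particular would cover $\sm\mathbb{X}\cap T^{-1}(\sm\mathbb{X})$, so Theorem~\ref{sep iso} would conclude the proof at once. Surjectivity can genuinely fail, however: the right shift on $\ell_{1}$ is an injective, non-surjective isometry preserving $\perp_{B}$ everywhere, and there $\sm\ell_{1}\cap T^{-1}(\sm\ell_{1})=\emptyset$, so even the engine of Theorem~\ref{sep iso} --- constancy of $f(x)=\|Tx\|/\|x\|$ on $\sm\mathbb{X}\cap T^{-1}(\sm\mathbb{X})$ --- runs empty there. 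I would circumvent this by passing, on the target side, to the closed subspace $\mathbb{Z}=\overline{T(\mathbb{X})}$. For a subspace $\mathbb{Z}$, Birkhoff--James orthogonality inside $\mathbb{Z}$ agrees with that in $\mathbb{X}$, and every support functional of a point $z\in\mathbb{Z}$ computed in $\mathbb{Z}$ has a norm-preserving Hahn--Banach extension in $J(z)$, so that $z\in\sm\mathbb{X}\cap\mathbb{Z}$ forces $z$ to be smooth in $\mathbb{Z}$ as well; thus $\sm\mathbb{X}\cap\mathbb{Z}\subseteq\sm\mathbb{Z}$. The aim is then to show that $T$ is bounded below --- so that $\mathbb{Z}=T(\mathbb{X})$ is closed and $T\colon\mathbb{X}\to\mathbb{Z}$ is a bijective homeomorphism --- together with $\sm\mathbb{Z}$ being a dense $G_{\delta}$ of $\mathbb{Z}$; then Theorem~\ref{sep iso} applies to $T\colon\mathbb{X}\to\mathbb{Z}$ and yields $\|Tx\|=\lambda\|x\|$ (with $\lambda>0$ a posteriori).

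The main obstacle is precisely this boundedness below, with the accompanying density and $G_{\delta}$-ness of $\sm\mathbb{Z}$ in $\mathbb{Z}$; injectivity alone does not supply it. I expect that one must use that $\perp_{B}$ is preserved at \emph{every} smooth point, not merely on $\sm\mathbb{X}\cap T^{-1}(\sm\mathbb{X})$: rerunning the computation in the proof of Theorem~\ref{sep iso}, for $x\in\sm\mathbb{X}$ with $J(x)=\{h\}$ and any $w\in\ker h$ one has $Tx\perp_{B}Tw$, hence $\rho'_{-}(Tx,Tw)\le 0\le\rho'_{+}(Tx,Tw)$, and therefore the continuous, locally Lipschitz function $f(x)=\|Tx\|/\|x\|$ on $\mathbb{X}\setminus\{0\}$ (constant along rays) has, at every point of the dense set $\sm\mathbb{X}$ and in every direction, right-hand derivative $\ge 0$ and left-hand derivative $\le 0$. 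Converting this ``directional submonotonicity on a dense set'', together with $f>0$, into $\inf_{S_{\mathbb{X}}}f>0$ and finally into $f\equiv\lambda$ is the heart of the matter; once that is done, $\|Tx\|=\lambda\|x\|$ follows, and with it the corollary and the $\mathcal{K}$-set statement.
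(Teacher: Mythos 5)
Your proposal does not actually prove the corollary. After the correct reduction via Theorem~\ref{sm inje} (injectivity) and the correct observation that one cannot simply take $\mathbb{Y}=\mathbb{X}$ in Theorem~\ref{sep iso} (your right-shift example on $\ell_1$ is apt), you leave the decisive step open by your own admission: you neither show that $T$ is bounded below with $\sm\mathbb{Z}$ a dense $G_{\delta}$ subset of $\mathbb{Z}=\overline{T(\mathbb{X})}$ (so that Theorem~\ref{sep iso} could be applied to $T\colon\mathbb{X}\to\mathbb{Z}$), nor carry out the substitute argument that $f(x)=\|Tx\|/\|x\|$ is constant. The information you do extract -- at each $x\in\sm\mathbb{X}$ and in each direction the right derivative of $f$ is $\geq 0$ and the left derivative is $\leq 0$ -- is strictly weaker than what the proof of Theorem~\ref{sep iso} exploits (both one-sided derivatives equal to $0$ at the points of a dense set, plus continuity), and by itself it does not force constancy: a given segment need not meet $\sm\mathbb{X}$ at all, and already in one variable a continuous, monotone, nonconstant function can have derivative $0$ on a dense set (Cantor-function type examples), so sign conditions on a dense set require additional structure (a.e.-type exceptional sets, absolute continuity along lines, etc.) that you do not supply. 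Note also that your inclusion $\sm\mathbb{X}\cap\mathbb{Z}\subseteq\sm\mathbb{Z}$ cannot yield density of $\sm\mathbb{Z}$; in your own example that intersection is empty, and the hypothesis on $\mathbb{X}$ does not include separability, so one cannot just quote Mazur's theorem for $\mathbb{Z}$ either.

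For comparison, the paper's own proof is precisely the route you set aside: Theorem~\ref{sm inje} gives injectivity, $T$ is then regarded as a bijection of $\mathbb{X}$ onto $T(\mathbb{X})$, and Theorem~\ref{sep iso} is invoked with $\mathbb{Y}=T(\mathbb{X})$, without addressing the points you raise (closedness of the range and continuity of $T^{-1}$, which Lemma~\ref{sm dense}(ii) and its Baire-category argument need, and the dense-$G_{\delta}$ property of $\sm T(\mathbb{X})$). So your scruples are legitimate and expose a real looseness in the two-line proof of the corollary as written; but identifying the obstruction is not the same as overcoming it, and since the ``heart of the matter'' in your plan is left unresolved, the proposal cannot be accepted as a proof of the statement.
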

    \begin{proof}
        Let $T\in \mathbb{L}(\mathbb{X})$ preserve Birkhoff-James orthogonality at each point of $\sm\mathbb{X}.$ From Theorem \ref{sm inje}, it follows that $T$ bijective from $\mathbb{X}$ to $T(\mathbb{X}).$ Now, $T$ preserves Birkhoff-James orthogonality at each point of $\sm\mathbb{X}\cap T^{-1}(\sm\mathbb{T(\mathbb{X})}).$  Thus, it follows from Theorem \ref{sep iso} that $T$ is a scalar multiple of an isometry.  
        \end{proof}

	Now we proceed to offer another refinement of the Blanco-Koldobsky-Turn\v{s}ek theorem in those spaces.
	\begin{theorem}\label{Th preseves}
		Let $\sm\mathbb{X}$ be a  dense $G_{\delta}$ subset of $\mathbb{X}.$  Suppose  $U\subset \mathbb{X}$ is dense in $\mathbb{X}.$ If $T\in \mathbb{L}(\mathbb{X})$ preserves Birkhoff-James orthogonality at each point of $U$ then $T$ is a scalar multiple of an isometry. 
	\end{theorem}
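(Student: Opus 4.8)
The plan is to reduce Theorem~\ref{Th preseves} to Corollary~\ref{sm k set} by showing that an operator preserving Birkhoff--James orthogonality at each point of a dense set $U$ automatically preserves it at each point of $\sm\mathbb{X}$. The bridge is Theorem~\ref{sm pre}: since $U$ is dense, $\overline{U}=\mathbb{X}$, and hence $\overline{U}\cap\sm\mathbb{X}=\sm\mathbb{X}$, so $T$ preserves Birkhoff--James orthogonality at every smooth point of $\mathbb{X}$. Once this is in hand, Corollary~\ref{sm k set} applies verbatim (its hypothesis is precisely that $\sm\mathbb{X}$ is a dense $G_\delta$ set and that $T$ preserves orthogonality at each point of $\sm\mathbb{X}$), and it yields that $T$ is a scalar multiple of an isometry.

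So the proof is essentially three lines. First I would note that $T$ is non-zero: if $U$ is dense then it contains a non-zero vector, and if $T$ vanished on that vector one still needs a little care, but in fact the statement is vacuous or trivial when $T=0$, so assume $T\neq 0$. Second, invoke Theorem~\ref{sm pre} with $A=U$ to conclude $T$ preserves Birkhoff--James orthogonality at each point of $\overline{U}\cap\sm\mathbb{X}=\mathbb{X}\cap\sm\mathbb{X}=\sm\mathbb{X}$. Third, apply Corollary~\ref{sm k set} to finish.

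The only genuine content is already packaged in the earlier results, so there is no real obstacle; the one point worth checking is that Theorem~\ref{sm pre} is stated for $T\in\mathbb{L}(\mathbb{X},\mathbb{Y})$ and here we specialize $\mathbb{Y}=\mathbb{X}$, which is fine, and that Corollary~\ref{sm k set} requires no additional hypothesis beyond what we have derived. One should also double-check the degenerate case: if $\sm\mathbb{X}$ were empty the corollary would be vacuous, but the hypothesis that $\sm\mathbb{X}$ is a \emph{dense} $G_\delta$ rules this out (in a non-trivial Banach space density forces $\sm\mathbb{X}\neq\emptyset$). Thus the chain $U \text{ dense} \Rightarrow T \text{ preserves } \perp_B \text{ on } \overline{U}\cap\sm\mathbb{X}=\sm\mathbb{X} \Rightarrow T=\lambda(\text{isometry})$ completes the argument, and in particular shows that every dense subset of $S_{\mathbb{X}}$ — and indeed of $\mathbb{X}$ — is a $\mathcal{K}$-set in such spaces.
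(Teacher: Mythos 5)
Your proposal is correct and follows exactly the paper's own argument: apply Theorem~\ref{sm pre} with $A=U$ to upgrade preservation on $U$ to preservation on $\overline{U}\cap\sm\mathbb{X}=\sm\mathbb{X}$, then invoke Corollary~\ref{sm k set}. The extra remarks about $T\neq 0$ and the nonemptiness of $\sm\mathbb{X}$ are harmless but not needed beyond what the cited results already handle.
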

	\begin{proof}
		Let $T\in \mathbb{L}(\mathbb{X})$ preserves Birkhoff-James orthogonality at each point of $U.$  Then it follows from Theorem \ref{sm pre} that  $T$ preserves Birkhoff-James orthogonality at each point of
        \[\overline{U}\cap \sm \mathbb{X}=\mathbb{X}\cap \sm \mathbb{X} = \sm \mathbb{X}.\]
         Therefore, it follows from Corollary \ref{sm k set} that $T$ is a scalar multiple of an isometry.
	\end{proof}
Due to the homogeneity of Birkhoff–James orthogonality, the following corollary follows directly from the above theorem.
    \begin{cor}\label{H preseves}
		Let $\sm\mathbb{X}$ be a  dense $G_{\delta}$ subset of $\mathbb{X}.$  Let $H\subset \mathbb{X}$ be a hyperplane that does not pass through the origin. If $T\in \mathbb{L}(\mathbb{X})$ preserves Birkhoff-James orthogonality at each point of $H$ then $T$ is a scalar multiple of an isometry. 
	\end{cor}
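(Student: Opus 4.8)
The plan is to deduce Corollary~\ref{H preseves} from Theorem~\ref{Th preseves} by replacing the affine hyperplane $H$ with the cone it generates. Since $H$ does not pass through the origin, there is a nonzero $f\in\mathbb{X}^*$ with $H=\{x\in\mathbb{X}: f(x)=1\}$. I would set $U=\{\lambda x:\lambda\in\mathbb{R},\ x\in H\}$ and first identify it concretely. If $f(y)\neq 0$, then $y=f(y)\cdot\bigl(y/f(y)\bigr)$ with $y/f(y)\in H$, so $y\in U$; conversely any $\lambda x$ with $f(x)=1$ satisfies $f(\lambda x)=\lambda$, which vanishes precisely when $\lambda x=0$. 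Hence $U=(\mathbb{X}\setminus\ker f)\cup\{0\}$.

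Next I would verify that $U$ is dense in $\mathbb{X}$. It suffices to see that $\mathbb{X}\setminus\ker f$ is dense: fix $z$ with $f(z)\neq 0$; for an arbitrary $w\in\ker f$ the vectors $w+tz$ converge to $w$ as $t\to 0$ while $f(w+tz)=tf(z)\neq 0$ for $t\neq 0$, and since $\mathbb{X}=\ker f\cup(\mathbb{X}\setminus\ker f)$ this gives density. Thus $U\supseteq \mathbb{X}\setminus\ker f$ is dense in $\mathbb{X}$.

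Then I would check that $T$ preserves Birkhoff--James orthogonality at every point of $U$. Using the homogeneity of $\perp_B$: for $x\in H$, $\lambda\in\mathbb{R}\setminus\{0\}$ and any $v\in\mathbb{X}$, the relation $\lambda x\perp_B v$ is equivalent to $x\perp_B v$; since $T$ preserves $\perp_B$ at $x$ we get $Tx\perp_B Tv$, and homogeneity again gives $T(\lambda x)=\lambda Tx\perp_B Tv$. The remaining point $0\in U$ is trivial, as $0\perp_B v$ and $T0=0\perp_B Tv$ for all $v$. Therefore $T$ preserves Birkhoff--James orthogonality at each point of the dense set $U$, and Theorem~\ref{Th preseves} immediately yields that $T$ is a scalar multiple of an isometry.

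I do not expect a genuine obstacle here; the statement is essentially a packaging of Theorem~\ref{Th preseves}. The only steps needing a modicum of care are the explicit description $U=(\mathbb{X}\setminus\ker f)\cup\{0\}$ together with its density, and the observation that preservation of $\perp_B$ on $H$ propagates to all real scalar multiples of points of $H$ — both of which follow directly from the homogeneity of Birkhoff--James orthogonality and the structure of hyperplanes.
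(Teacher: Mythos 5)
Your proposal is correct and is essentially the paper's own argument: the paper derives this corollary from Theorem \ref{Th preseves} by noting that, via the homogeneity of Birkhoff--James orthogonality, preservation on $H$ extends to the dense cone generated by $H$. You have simply spelled out the details (the identification of that cone as $(\mathbb{X}\setminus\ker f)\cup\{0\}$ and its density) that the paper leaves implicit.
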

	
	 Note that in a Banach space, if an operator preserves Birkhoff-James orthogonality on a hyperplane passing through the origin, it may not necessarily be a scalar multiple of an isometry. This is because such a hyperplane is a subspace of codimension $1,$ ensuring the existence of a non-zero operator whose kernel contains that hyperplane.
	
	\begin{remark}\label{equivalence}
		For a Banach space $\mathbb{X},$ if $x\in \sm \mathbb{X}$ then it is easy to observe that for any non-zero operator $T\in\mathbb{L}(\mathbb{X}),$ the following are true:	\begin{itemize}
			\item [(i)] If $T$ preserves $\rho_+$-orthogonality at $x$ then $T$ preserves Birkhoff-James orthogonality at $x.$
			\item [(ii)]If $T$ preserves $\rho_-$-orthogonality at $x$ then $T$ preserves Birkhoff-James orthogonality at $x.$
			\item [(iii)]If $T$ preserves $\rho$-orthogonality at $x$ then $T$ preserves  Birkhoff-James orthogonality at $x.$
		\end{itemize}
	\end{remark}
	By combining the results established so far, we arrive at the following characterization of isometries on  Banach spaces in which the set of  all smooth points forms a dense $G_{\delta}$ set.
	\begin{theorem}\label{norm-derivatives}
		Let $\mathbb{X}$ be a Banach space such that $\sm\mathbb{X}$ is a  dense $G_{\delta}$ subset of $\mathbb{X}.$ and $T\in\mathbb{L}(\mathbb{X})$ be non-zero. Then the following conditions are equivalent:
		\begin{itemize}
			\item [(i)] $T$ preserves Birkhoff-James orthogonality on $\sm \mathbb{X}.$
            \item[(ii)] $T$ preserves Birkhoff-James orthogonality at each point of  $U,$ where $U$ is a dense subset of $\mathbb{X}.$
			\item [(iii)] $T$ preserves Birkhoff-James orthogonality at each point of  $H,$ where $H$ is a hyperplane that does not pass through the origin.
			\item [(iv)] $T$ preserves $\rho_+$-orthogonality on $\sm \mathbb{X}.$
			\item [(v)] $T$ preserves $\rho_-$-orthogonality on $\sm \mathbb{X}.$
			\item [(vi)] $T$ preserves $\rho$-orthogonality on $\sm \mathbb{X}.$
			\item [(vii)] $T$ is a scalar multiple of an isometry. 
		\end{itemize}
	\end{theorem}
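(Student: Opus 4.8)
The plan is to show the seven conditions are equivalent by a short cycle of implications, almost all of which are already contained in the preceding results; the only genuinely new step is the (routine) verification that a scalar multiple of an isometry preserves the norm-derivative orthogonalities. First I would prove (vii) $\Rightarrow$ (i), (iii), (iv), (v), (vi). Write $T=\lambda A$ with $A\in\mathbb{L}(\mathbb{X})$ an isometry and $\lambda\in\mathbb{R}\setminus\{0\}$, so that $\|Tu+tTv\|=|\lambda|\,\|u+tv\|$ and $\|Tu\|=|\lambda|\,\|u\|$ for all $u,v\in\mathbb{X}$ and $t\in\mathbb{R}$. Substituting into the definitions of $\rho'_{\pm}$ yields $\rho'_{\pm}(Tu,Tv)=\lambda^2\,\rho'_{\pm}(u,v)$, hence also $\rho'(Tu,Tv)=\lambda^2\,\rho'(u,v)$; since $\lambda^2>0$, vanishing of the left side is equivalent to vanishing of the right side, so $T$ preserves $\rho_+$-, $\rho_-$- and $\rho$-orthogonality at every point of $\mathbb{X}$, giving (iv), (v), (vi). The same identity $\|Tu+tTv\|=|\lambda|\,\|u+tv\|$ shows $Tu\perp_B Tv\iff u\perp_B v$, so $T$ preserves Birkhoff--James orthogonality at every point of $\mathbb{X}$, giving (i) and (iii).

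For the reverse directions I would invoke the results already established. The implications (iv) $\Rightarrow$ (i), (v) $\Rightarrow$ (i) and (vi) $\Rightarrow$ (i) are precisely Remark \ref{equivalence}(i)--(iii), applicable because conditions (iv)--(vi) only concern points of $\sm\mathbb{X}$. The implication (i) $\Rightarrow$ (ii) is immediate: since $\sm\mathbb{X}$ is dense in $\mathbb{X}$, one may take $U=\sm\mathbb{X}$. Finally, (ii) $\Rightarrow$ (vii) is Theorem \ref{Th preseves} and (iii) $\Rightarrow$ (vii) is Corollary \ref{H preseves}. Putting these together: (vii) implies each of (i)--(vi); (i) $\Rightarrow$ (ii) $\Rightarrow$ (vii); (iii) $\Rightarrow$ (vii); and (iv), (v), (vi) each imply (i), hence (vii). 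Thus all seven conditions are equivalent. (Alternatively, (i) $\Rightarrow$ (vii) follows directly from Corollary \ref{sm k set}, bypassing (ii).)

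Because every ingredient is already available, there is no real obstacle here; the proof is essentially an assembly. The only points deserving care are the bookkeeping in (vii) $\Rightarrow$ (iv)--(vi) — one must keep $\lambda\neq 0$ so that the factor $\lambda^2$ is strictly positive and the equivalence of the vanishing conditions is legitimate — and the fact that Remark \ref{equivalence} may be used only at smooth points, which is exactly the content of conditions (iv)--(vi). No new density or category argument beyond those already used in Theorem \ref{Th preseves} and Corollary \ref{sm k set} is needed.
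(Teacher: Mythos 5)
Your proposal is correct and matches the paper's intended argument: the paper gives no separate proof, stating only that the theorem follows by combining the earlier results (Corollary \ref{sm k set}, Theorem \ref{Th preseves}, Corollary \ref{H preseves}, Remark \ref{equivalence}), which is exactly the assembly you carry out, together with the routine verification that a scalar multiple of an isometry preserves Birkhoff--James and the $\rho_{\pm}$-, $\rho$-orthogonalities via $\rho'_{\pm}(Tu,Tv)=\lambda^{2}\rho'_{\pm}(u,v).$ Your cycle of implications is complete and sound, so nothing is missing.
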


	Now we turn our attention to the preservation of Birkhoff-James orthogonality at extreme points of the unit ball of the Banach space. First, we have the following result.
	\begin{theorem}\label{span}
		Let $\mathbb{X}$ be a finite-dimensional Banach space and let $\mathbb{Y}$ be a proper subspace of $\mathbb{X}.$ Then $$ \spn \Big(\bigcap\limits_{x\in \ext B_{\mathbb{X}}\setminus \mathbb{Y}} J(x)\Big)=\mathbb{X}^*.$$ 
	\end{theorem}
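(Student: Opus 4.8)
\emph{A remark on the statement.} As displayed the identity cannot hold, because the indexing family $\mathcal E:=\ext B_{\mathbb X}\setminus\mathbb Y$ is nonempty (otherwise $B_{\mathbb X}=\co(\ext B_{\mathbb X})\subseteq\mathbb Y$, impossible for a proper subspace) and symmetric, $\mathbb Y$ being a subspace. Picking $x\in\mathcal E$ we also have $-x\in\mathcal E$, and no $g\in S_{\mathbb X^*}$ can satisfy both $g(x)=1$ and $g(-x)=-g(x)=1$; hence already $J(x)\cap J(-x)=\emptyset$, so the \emph{intersection} $\bigcap_{x\in\mathcal E}J(x)$ is empty and its span is $\{0\}\neq\mathbb X^*$. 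I therefore read $\bigcap$ as $\bigcup$ and prove $\spn\big(\bigcup_{x\in\mathcal E}J(x)\big)=\mathbb X^*$, which is evidently the intended statement.

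\emph{Strategy.} I argue by contradiction through the annihilator. Set $W=\spn\big(\bigcup_{x\in\mathcal E}J(x)\big)$ and suppose $W\subsetneq\mathbb X^*$. Since $\dm\mathbb X<\infty$, the annihilator of $W$ in $\mathbb X$ is nonzero, so there is $w\neq0$ with $g(w)=0$ for every $g\in J(x)$ and every $x\in\mathcal E$. The plan is to show that such a $w$ forces the entire unit ball to collapse into $\mathbb Y$.

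\emph{Key steps.} First, fix any $g\in S_{\mathbb X^*}$ with $g(w)\neq0$. The face $F_g=\{u\in B_{\mathbb X}:g(u)=1\}$ is nonempty and compact, hence contains an extreme point $x_0$ of $B_{\mathbb X}$, and $g\in J(x_0)$. If $x_0$ lay in $\mathcal E$ we would obtain $g(w)=0$, a contradiction; therefore $x_0\in\ext B_{\mathbb X}\cap\mathbb Y$. Thus, writing $C=\co(\ext B_{\mathbb X}\cap\mathbb Y)\subseteq\mathbb Y$, the maximum of $g$ over $B_{\mathbb X}$ is attained at a point of $C$, so the support functions satisfy $h_{B_{\mathbb X}}(g)=h_C(g)$ (the reverse inequality $h_C\leq h_{B_{\mathbb X}}$ being automatic from $C\subseteq B_{\mathbb X}$). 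Since this holds for all $g$ in the dense open set $\{g:g(w)\neq0\}$, and support functions are continuous and positively homogeneous, $h_{B_{\mathbb X}}\equiv h_C$ on $\mathbb X^*$. A compact convex set being determined by its support function, we get $B_{\mathbb X}=C\subseteq\mathbb Y$. But $\mathbb Y$ is a proper subspace, hence has empty interior, whereas $B_{\mathbb X}$ has nonempty interior in the finite-dimensional space $\mathbb X$; this contradiction forces $W=\mathbb X^*$.

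\emph{Main obstacle.} The delicate point is exactly the effect of removing the extreme points that lie in $\mathbb Y$: one must guarantee that excising them from the index set still leaves enough norming functionals to span $\mathbb X^*$. The mechanism making this work is the observation in the first step, namely that if $g(w)\neq0$ then $g$ cannot norm any extreme point outside $\mathbb Y$, so its whole maximizing face is contained in $\mathbb Y$; propagating this over the dense set of directions with $g(w)\neq0$ is what collapses $B_{\mathbb X}$ onto the lower-dimensional set $C$. The extreme cases $\mathbb Y=\{0\}$ and $\mathbb Y$ a hyperplane need no separate treatment, since the support-function comparison is uniform in $\dm\mathbb Y$.
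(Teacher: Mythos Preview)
Your reading of $\bigcap$ as $\bigcup$ is correct, and the paper's own proof confirms this: it writes ``there exists $w\in S_{\mathbb X}\setminus\mathbb Y$ such that $f\in J(w)$'' immediately after taking $f$ in the span, which only makes sense for a union. Your proof is correct.

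The overall architecture matches the paper's: assume the span $W$ is proper, show that on a dense set of directions every norming functional must attain its norm on $\mathbb Y$, pass to the limit by continuity, and contradict the properness of $\mathbb Y$. The packaging differs in two ways. First, the paper inserts a preliminary step showing $\spn\big(\bigcup_{x\in\ext B_{\mathbb X}\setminus\mathbb Y}J(x)\big)=\spn\big(\bigcup_{x\in S_{\mathbb X}\setminus\mathbb Y}J(x)\big)$ and then argues with the latter; you bypass this by going straight to an extreme point of the face $F_g$. Second, for the limit the paper runs a sequential compactness argument (take $g_n\to g$, pick $y_n\in S_{\mathbb Y}$ with $g_n(y_n)=\|g_n\|$, extract a convergent subsequence) to conclude that every $g\in S_{\mathbb X^*}$ norms in $S_{\mathbb Y}$, then contradicts this with a functional annihilating $\mathbb Y$; you instead compare support functions $h_{B_{\mathbb X}}$ and $h_C$ on a dense set and invoke that a compact convex body is determined by its support function to force $B_{\mathbb X}\subseteq\mathbb Y$. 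Your version is a little cleaner---it avoids both the reduction step and the subsequence extraction---while the paper's version stays closer to elementary sequential reasoning; neither gains real generality over the other.
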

	\begin{proof}
		First we show that \[\spn \Big(\bigcap\limits_{x\in \ext B_{\mathbb{X}}\setminus \mathbb{Y}} J(x)\Big)=\spn \Big(\bigcap\limits_{x\in  S_{\mathbb{X}}\setminus \mathbb{Y}} J(x)\Big).\]
		Clearly,
		$\spn \Big(\bigcap\limits_{x\in \ext B_{\mathbb{X}}\setminus \mathbb{Y}} J(x)\Big)\subset\spn \Big(\bigcap\limits_{x\in  S_{\mathbb{X}}\setminus \mathbb{Y}} J(x)\Big).$
		Let $f\in \spn \Big(\bigcap\limits_{x\in  S_{\mathbb{X}}\setminus \mathbb{Y}} J(x)\Big).$ Then there exists $w\in  S_{\mathbb{X}}\setminus \mathbb{Y}$ such that $f\in J(w).$
		Consider the face $F=\left\{x\in S_{\mathbb{X}}:f(x)=1\right\}.$  Since $F\not\subset\mathbb{Y},$ it follows that $\ext F\setminus \mathbb{Y}\neq \emptyset.$ Then there exists $z\in \ext F\setminus \mathbb{Y}\subset  \ext B_{\mathbb{X}}\setminus \mathbb{Y}$ such that $f\in J(z).$ Thus, $\spn \Big(\bigcap\limits_{x\in \ext B_{\mathbb{X}}\setminus \mathbb{Y}} J(x)\Big)\supset \spn \Big(\bigcap\limits_{x\in  S_{\mathbb{X}}\setminus \mathbb{Y}} J(x)\Big).$
		Consequently, 
		\[\spn \Big(\bigcap\limits_{x\in \ext B_{\mathbb{X}}\setminus \mathbb{Y}} J(x)\Big)=\spn \Big(\bigcap\limits_{x\in  S_{\mathbb{X}}\setminus \mathbb{Y}} J(x)\Big).\]
		
		If possible suppose that $\mathbb{V}=\spn \Big(\bigcap\limits_{x\in  S_{\mathbb{X}}\setminus \mathbb{Y}} J(x)\Big)$ is a proper subspace of $\mathbb{X}^*.$ Now, for each $f\in S_{\mathbb{X}^*}\setminus \mathbb{V},$ there exists $y\in S_{\mathbb{Y}}$ such that $f(y)=1.$\\
		Next, let $g\in  S_{\mathbb{X}^*}\cap \mathbb{V}.$ Since  $\mathbb{X}^*\setminus \mathbb{V}$ is dense in $\mathbb{X}^*,$ there exists a sequence $\{g_n\}\subset \mathbb{X}^*\setminus \mathbb{V}$ such that $g_n\longrightarrow g.$ Let for each $n\in \mathbb{N}, ~ y_n\in S_{\mathbb{Y}}$ such that $g_n(y_n)=\|g_n\|.$ Since  $S_{\mathbb{Y}}$ is compact, there exists a subsequence $\{y_{n_k}\}$ of $\{y_n\}$ such that $y_{n_k}\longrightarrow  y\in S_{\mathbb{Y}}.$  So $g_{n_k}(y_{n_k})\longrightarrow g(y)$ and $g_{n_k}(y_{n_k})= \|g_{n_k}\|\longrightarrow \|g\|=1.$ Hence $g(y)=1.$ Therefore, for each $f\in S_{\mathbb{X}^*}$ there exists  $x\in S_{\mathbb{Y}} $ such that $f(x)=1.$ This is a contradiction, as there always exists $h\in S_{\mathbb{X}^*}$ such that $h(y)=0$ for all $y\in \mathbb{Y}.$ This completes the proof the lemma.
	\end{proof}
	
	The above result leads us to the following corollary.
	
	\begin{cor}\label{bijective}
		Let $\mathbb{X}$ be a finite-dimensional Banach space. If any non-zero operator $T\in \mathbb{L}(\mathbb{X})$ preserves Birkhoff-James orthogonality at each $x\in \ext B_{\mathbb {X}}$ then $T$ is bijective. 
	\end{cor}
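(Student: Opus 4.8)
The plan is to reduce the statement to injectivity, which in a finite-dimensional space is equivalent to bijectivity, and then to exclude a non-trivial kernel by combining the James characterization of Birkhoff--James orthogonality with Theorem \ref{span}.

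First I would argue by contradiction, assuming that $T$ is not injective. Since $T \neq 0$, the kernel $\mathbb{Y} := \ker T$ is a proper, non-zero subspace of $\mathbb{X}$; fix some $z \in \mathbb{Y} \setminus \{0\}$. The aim is to show that $z$ is annihilated by every supporting functional at every extreme point of $B_{\mathbb{X}}$ lying outside $\mathbb{Y}$.

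To establish this, fix $x \in \ext B_{\mathbb{X}} \setminus \mathbb{Y}$ (so that $\|x\| = 1$ and $Tx \neq 0$) and $f \in J(x)$. Put $w := z - f(z)\,x$; then $f(w) = f(z) - f(z)f(x) = 0$, so $x \perp_B w$ by the James characterization. As $T$ preserves Birkhoff--James orthogonality at $x$, this yields $Tx \perp_B Tw$. But $Tw = Tz - f(z)Tx = -f(z)\,Tx$, hence $Tx \perp_B \big(-f(z)\,Tx\big)$; since a non-zero vector cannot be Birkhoff--James orthogonal to itself (take $\lambda = -1$ in the definition) and $Tx \neq 0$, we must have $f(z) = 0$. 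Letting $x$ and $f \in J(x)$ vary, we conclude that $g(z) = 0$ for every $g$ in $\bigcup_{x \in \ext B_{\mathbb{X}} \setminus \mathbb{Y}} J(x)$, and therefore for every $g$ in its linear span.

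By Theorem \ref{span} this span equals $\mathbb{X}^*$, so $g(z) = 0$ for all $g \in \mathbb{X}^*$, forcing $z = 0$ — a contradiction. Hence $T$ is injective, and being a linear operator on a finite-dimensional space, it is bijective. I do not anticipate a genuine obstacle here: the essential input is Theorem \ref{span}, while the remaining ingredients (that extreme points of $B_{\mathbb{X}}$ lie on $S_{\mathbb{X}}$, so that $f(x) = 1$ for $f \in J(x)$; the James characterization; and the fact that $v \perp_B v$ implies $v = 0$) are elementary.
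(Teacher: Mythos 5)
Your proof is correct and takes essentially the same route as the paper's: both reduce the claim to injectivity and combine Theorem \ref{span} with local preservation at extreme points and the fact that a non-zero vector is never Birkhoff--James orthogonal to itself, the only difference being organizational (the paper extracts $n$ linearly independent functionals and one witness $x_j=\alpha z+h$, while you show directly that every supporting functional at every extreme point outside $\ker T$ annihilates $z$). Note also that you read Theorem \ref{span} as the span of the union of the sets $J(x)$, which is the intended meaning (the $\bigcap$ in its statement is evidently a typo) and is exactly how the paper itself applies it.
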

	\begin{proof}
		Let $\dim \mathbb{X}= n.$ If possible, suppose that $\ker T\neq \{0\}.$ From Theorem \ref{span},  it follows that there exist $x_1, x_2,\ldots,x_n\in \ext B_{\mathbb{X}} \setminus \ker T $ such that for each $1\leq i\leq n,$ there exists $f_i\in J(x_i),$ where $f_i$ are linearly independent. Now,
		$$\bigcap\limits_{1\leq i\leq n}\ker f_i= \{0\}.$$ 
		Then there exists  $1\leq j\leq n$ such that $\ker T\setminus \ker f_j\neq \emptyset.$ Let $z\in \ker T\setminus \ker f_j.$  Then $x_j= \alpha z+h$ for some $\alpha\in \mathbb{R}\setminus\{0\}$ and $h\in \ker f_j\setminus \{0\}$ and so $Tx_j=Th.$ Now,
        \begin{eqnarray*}
            x_j\perp_B h&\implies& Tx_j\perp_B Th\\
            &\implies& Tx_j\perp_B Tx_j\\
            &\implies& Tx_j=0
        \end{eqnarray*}
         which is a contradiction. Therefore, $T$ is bijective.
		
	\end{proof}

	\begin{remark}
		For an $n$-dimensional Banach space, if a non-zero operator $T\in \mathbb{L}(\mathbb{X})$ preserves Birkhoff-James orthogonality at $n$ linearly independent extreme points of $B_{\mathbb {X}}$ then $T$ may not be bijective. For example, consider the $3$-dimensional Hilbert space $\mathbb{H}.$ Then the operator $T\in \mathbb{L}(\mathbb{H})$ given by $T(x,y,z)=(x,y,0)$ preserves Birkhoff-James orthogonality at three linearly independent extreme points $(1,0,0),(0,1,0),(0,0,1)$ but $T$ is not bijective.
	\end{remark}

  Next, we have the following observation regarding the local preservation of Birkhoff-James orthogonality at any points on a face of the unit ball of a Banach space.
	\begin{lemma}\label{equi}
		  Let $T\in \mathbb{L}(\mathbb{X}, \mathbb{Y})$ preserve Birkhoff-James orthogonality at $u,v\in S_{\mathbb{X}}$. If $u,v$ are on the same face $F$ of $B_{\mathbb{X}}$ then $\|Tu\|=\|Tv\|.$
	\end{lemma}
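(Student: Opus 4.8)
The plan is to reduce the statement to producing a single functional $f \in S_{\mathbb{X}^*}$ that supports $B_{\mathbb{X}}$ simultaneously at $u$ and at $v$, and then to exploit that $u - v$ lies in $\ker f$. First I would consider the midpoint $w := \tfrac{1}{2}(u+v)$. Since a face of $B_{\mathbb{X}}$ other than $B_{\mathbb{X}}$ itself is contained in $S_{\mathbb{X}}$ (if $F$ contained an interior point of $B_{\mathbb{X}}$, the defining property of a face would force $F = B_{\mathbb{X}}$), and $F$ is convex, the segment joining $u$ and $v$ lies in $F \subseteq S_{\mathbb{X}}$; in particular $\|w\| = 1$. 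By the Hahn-Banach theorem pick $f \in J(w)$. From $f(w) = 1$ together with $f(u) \le 1$ and $f(v) \le 1$ we are forced to have $f(u) = f(v) = 1$, so $f \in J(u) \cap J(v)$ and $f(u - v) = 0$.

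Next, since $f \in J(u)$, for every scalar $\lambda$ one has $\|u + \lambda(u-v)\| \ge f(u + \lambda(u-v)) = f(u) = 1 = \|u\|$, so $u \perp_B (u-v)$; the identical argument with $f \in J(v)$ gives $v \perp_B (u-v)$.

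Finally, I would invoke the hypothesis that $T$ preserves Birkhoff-James orthogonality at $u$ and at $v$, which yields $Tu \perp_B (Tu - Tv)$ and $Tv \perp_B (Tu - Tv)$. Putting $\lambda = -1$ in $\|Tu + \lambda(Tu - Tv)\| \ge \|Tu\|$ gives $\|Tv\| \ge \|Tu\|$, and putting $\lambda = 1$ in $\|Tv + \lambda(Tu - Tv)\| \ge \|Tv\|$ gives $\|Tu\| \ge \|Tv\|$; hence $\|Tu\| = \|Tv\|$.

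The argument is short and I do not foresee a genuine obstacle; the only delicate point is the initial reduction, namely checking that $w$ lies on $S_{\mathbb{X}}$, which is where the hypothesis that $u$ and $v$ lie on a \emph{common} face is actually used (two points of $S_{\mathbb{X}}$ lying on different faces need not have their midpoint on the sphere, and the conclusion can fail in that case). Everything after that is a direct manipulation of the definitions of $J(\cdot)$ and $\perp_B$.
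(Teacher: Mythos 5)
Your proof is correct and takes essentially the same route as the paper's: find a single functional supporting $B_{\mathbb{X}}$ at both $u$ and $v$, deduce $u \perp_B (u-v)$ and $v \perp_B (u-v)$, push this through $T$ using the preservation hypothesis, and then specialize the scalar in the definition of Birkhoff--James orthogonality. Your two minor deviations are actually slight improvements in rigor: producing the common functional via the midpoint and Hahn--Banach avoids the paper's implicit assumption that the face admits a support functional equal to $1$ on all of $F$, and taking $\lambda = \pm 1$ directly (rather than writing $Tu = ky$, $Tv = lz$ and using $\lambda = -1/k$) removes the paper's case distinction $k,l \neq 0$.
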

	\begin{proof}
		Let $Tu=ky$ and $Tv=lz$ for some $y,z\in B_{\mathbb{X}}$ and for some $k,l\in \mathbb{R}.$ Let $f\in S_{\mathbb{X}^*}$ be a support functional corresponding to the face $F$ of $B_{\mathbb{X}}.$ Then $f(u-v)=\|u\|-\|v\|=0$ and so $u\perp_B(u-v)$ and $v\perp_B(u-v).$ Then $y\perp_B(ky-lz)$ and $z\perp_B(ky-lz).$ Clearly, $k=0\iff l=0.$ Let $k,l\neq 0.$  Now,
		\[y\perp_B(ky-lz)\implies \|y+\lambda(ky-lz)\|\geq\|y\|\text{ for all }\lambda \in \mathbb{R}.\] 
		Let $\lambda=-\frac{1}{k},$ then $\frac{|l| }{|k|}\geq1.$ Similarly, from $z\perp_{B}{(ky-lz)} ,$ we have  $\frac{|k|}{|l|}\geq1.$ Thus, $|l|=|k|$ and so $\|Tu\|=\|Tv\|.$
	\end{proof}

	Now we state a characterization of a subspace contained in the orthogonal region of a point in a Banach space, which will be needed for our next result.
	\begin{lemma}\cite[Lemma 2.14]{MMPS25}\label{subspace}
		Let  $x\in\mathbb{X}$ be non-zero and let $\mathbb{V}$  be a subspace of $\mathbb{X}.$ Then $\mathbb{V}\subset x^{\perp_B}$ if and only if there exists $f\in J(x)$ such that $\mathbb{V}\subset \ker f.$
	\end{lemma}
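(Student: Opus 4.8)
The plan is to treat the two implications separately: the ``if'' direction is immediate from the definition of Birkhoff--James orthogonality, while the ``only if'' direction comes from a one-dimensional extension of a scalar functional followed by an application of the Hahn--Banach theorem. For the ``if'' direction, suppose $f\in J(x)$ satisfies $\mathbb{V}\subset\ker f$. Fix $v\in\mathbb{V}$ and $\lambda\in\mathbb{R}$. Since $\|f\|=1$ and $f(v)=0$,
\[
\|x+\lambda v\|\ \ge\ f(x+\lambda v)\ =\ f(x)+\lambda f(v)\ =\ \|x\|,
\]
so $x\perp_B v$; as $v\in\mathbb{V}$ was arbitrary, $\mathbb{V}\subset x^{\perp_B}$.

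For the ``only if'' direction, assume $\mathbb{V}\subset x^{\perp_B}$. First observe that $x\notin\mathbb{V}$: if $x\in\mathbb{V}$ then $x\perp_B x$, which would give $\|x\|=\|x+(-1)x\|\ge\|x\|$ only if $\|x\|=0$, contradicting $x\neq 0$. Hence $\spn\{x\}\cap\mathbb{V}=\{0\}$, so every element of the subspace $\mathbb{W}:=\spn\{x\}+\mathbb{V}$ has a unique representation $\alpha x+v$ with $\alpha\in\mathbb{R}$, $v\in\mathbb{V}$, and we may define a linear functional $f_0:\mathbb{W}\to\mathbb{R}$ by $f_0(\alpha x+v)=\alpha\|x\|$. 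The key step is the estimate $|f_0(w)|\le\|w\|$ for all $w\in\mathbb{W}$: this is clear when $w\in\mathbb{V}$, and for $w=\alpha x+v$ with $\alpha\neq 0$ we use that $\alpha^{-1}v\in\mathbb{V}\subset x^{\perp_B}$, whence $\|x+\alpha^{-1}v\|\ge\|x\|$ and therefore
\[
|f_0(\alpha x+v)|\ =\ |\alpha|\,\|x\|\ \le\ |\alpha|\,\big\|x+\alpha^{-1}v\big\|\ =\ \|\alpha x+v\|.
\]
Thus $\|f_0\|\le 1$ on $\mathbb{W}$, and since $f_0(x)=\|x\|$ we get $\|f_0\|=1$.

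Applying the Hahn--Banach theorem, extend $f_0$ to $f\in\mathbb{X}^*$ with $\|f\|=1$. Then $f(x)=f_0(x)=\|x\|$, so $f\in J(x)$, and $f|_{\mathbb{V}}=f_0|_{\mathbb{V}}=0$, i.e. $\mathbb{V}\subset\ker f$, which completes the argument. I do not expect a genuine obstacle here: the only point demanding care is the norm bound $\|f_0\|\le 1$ on $\mathbb{W}$, which is precisely where the hypothesis $\mathbb{V}\subset x^{\perp_B}$ is used, and no closedness assumption on $\mathbb{V}$ is needed since Hahn--Banach applies to arbitrary subspaces. (Alternatively one could pass to the quotient $\mathbb{X}/\overline{\mathbb{V}}$, note that $\mathbb{V}\subset x^{\perp_B}$ forces $\|x+\overline{\mathbb{V}}\|=\|x\|$, pull back a norming functional of $x+\overline{\mathbb{V}}$, and compose with the quotient map; the direct one-dimensional extension above is cleaner because it sidesteps any discussion of closures.)
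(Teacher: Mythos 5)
Your argument is correct: the ``if'' direction is the standard supporting-functional estimate, and the ``only if'' direction correctly defines $f_0(\alpha x+v)=\alpha\|x\|$ on $\spn\{x\}\oplus\mathbb{V}$, uses $\mathbb{V}\subset x^{\perp_B}$ exactly where needed to get $\|f_0\|\le 1$, and extends by Hahn--Banach (the phrasing of the check that $x\notin\mathbb{V}$ should read $0=\|x+(-1)x\|\ge\|x\|$, but that is a harmless slip). The paper itself gives no proof of this lemma --- it is quoted from \cite[Lemma 2.14]{MMPS25} --- and your Hahn--Banach extension argument is the standard proof of that cited result, so there is nothing further to reconcile.
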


	We now present some consequences when any operator defined on a Banach space preserves Birkhoff-James orthogonality on a set lying within a face of the unit ball of the space.
	
	\begin{theorem}\label{faceToface}
		Let  $F$ be a face of $B_{\mathbb{X}}.$  If  $T\in \mathbb{L}(\mathbb{X},\mathbb{Y})$ preserves Birkhoff-James orthogonality on a set $A\subset F$ then the following results hold:
		\begin{itemize}
			\item[(i)]  Either $T(A)=0$ or $\frac{1}{k}T (A)\subset G,$  where $G$  is a  face of $B_{\mathbb{Y}}$ and for some $k\in \mathbb{R^+}.$ 
			\item[(ii)] $T$ preserves Birkhoff-James orthogonality on $\co(A).$
		\end{itemize}
	\end{theorem}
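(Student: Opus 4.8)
The plan is to prove (i) first and feed its conclusion into (ii). For (i): I would begin with Lemma~\ref{equi}. Since any two points of $A$ lie on the common face $F$, the scalar $\|Tu\|$ is independent of $u\in A$; call it $k$. If $k=0$ then $T(A)=0$, so assume $k>0$. Fix a supporting functional $f\in S_{\mathbb X^*}$ of $F$ (so $f\equiv 1$ on $F$) and set $W=\spn\{u-v:u,v\in A\}\subseteq\ker f$. For each $u\in A$ we have $f\in J(u)$, so Lemma~\ref{subspace} gives $W\subseteq u^{\perp_B}$; since $T$ preserves Birkhoff--James orthogonality at $u$, $T(W)\subseteq(Tu)^{\perp_B}$, and as $T(W)$ is a subspace, Lemma~\ref{subspace} again yields $g_u\in J(Tu)$ with $T(W)\subseteq\ker g_u$. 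Fixing $u_0\in A$ and putting $g_0:=g_{u_0}$, for every $u\in A$ one gets $g_0(Tu)=g_0(Tu_0)+g_0\big(T(u-u_0)\big)=\|Tu_0\|=k$ (because $u-u_0\in W$), so that $g_0\in J\big(\tfrac1k Tu\big)$. Hence $\tfrac1k T(A)\subseteq G:=\{y\in B_{\mathbb Y}:g_0(y)=1\}$, which is an exposed face of $B_{\mathbb Y}$, hence a face.

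For (ii) it suffices to show that $T$ preserves Birkhoff--James orthogonality at each $x\in\co(A)$. Write $x=\sum_{i=1}^{m}t_iu_i$ with $u_i\in A$, $t_i>0$, $\sum t_i=1$; note $x\in F$, so $\|x\|=1$. Let $x\perp_B z$ and pick $h\in J(x)$ with $h(z)=0$ — possible because $x\perp_B z$ forces $0\in\{f(z):f\in J(x)\}$, the latter being a compact interval by weak$^*$-compactness of $J(x)$ and the formulas for $\rho'_\pm$. From $1=h(x)=\sum t_ih(u_i)\le\sum t_i\|u_i\|=1$ I get $h(u_i)=1=\|u_i\|$, i.e.\ $h\in J(u_i)$, for every $i$; hence $z\in\ker h\subseteq u_i^{\perp_B}$, so $u_i\perp_B z$ and therefore $Tu_i\perp_B Tz$. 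If $k=0$ then $Tx=0\perp_B Tz$ trivially; if $k>0$ then, as in (i), $\tfrac1k Tx=\sum t_i\,\tfrac1k Tu_i\in G$, so $\|Tx\|=k$, and what remains is the passage from $Tu_i\perp_B Tz$ for every $i$ to $Tx=\sum t_iTu_i\perp_B Tz$.

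I expect this amalgamation to be the main obstacle, since $Tu_i\perp_B Tz$ for all $i$ does not by itself give $\sum t_iTu_i\perp_B Tz$; the resolution is a codimension count inside $T(\mathbb X)$. From $h\in J(u_i)$ we have $\ker h\subseteq u_i^{\perp_B}$, so $T(\ker h)\subseteq(Tu_i)^{\perp_B}$ for every $i$. Since $h(x)\neq0$, $\mathbb X=\ker h\oplus\spn\{x\}$, whence $T(\mathbb X)=T(\ker h)+\spn\{Tx\}$; were $T(\ker h)=T(\mathbb X)$, then $Tu_i\in T(\mathbb X)\subseteq(Tu_i)^{\perp_B}$ would force $Tu_i=0$, contradicting $k>0$, so $T(\ker h)$ has codimension exactly $1$ in $T(\mathbb X)$. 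By Lemma~\ref{subspace}, for each $i$ there is $g_i\in J(Tu_i)$ with $T(\ker h)\subseteq\ker g_i$; as $g_i|_{T(\mathbb X)}\neq0$ (it sends $Tu_i$ to $k$) and its kernel contains the codimension-$1$ subspace $T(\ker h)$, necessarily $\ker\big(g_i|_{T(\mathbb X)}\big)=T(\ker h)$. Thus all $g_i|_{T(\mathbb X)}$ are proportional; moreover $Tu_i-Tu_j=T(u_i-u_j)\in T(\ker h)$ (because $h(u_i)=h(u_j)$), so any functional with kernel $T(\ker h)$ takes one and the same nonzero value at every $Tu_i$, which pins the proportionality constants and gives $g_1|_{T(\mathbb X)}=g_2|_{T(\mathbb X)}=\dots$. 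Consequently $g_1(Tu_j)=k=\|Tu_j\|$ for all $j$, so $g_1(Tx)=\sum t_jg_1(Tu_j)=k=\|Tx\|$ and $g_1\in J(Tx)$; finally $Tz\in T(\ker h)\subseteq\ker g_1$, so $g_1(Tz)=0$ and $\|Tx+\lambda Tz\|\ge g_1(Tx+\lambda Tz)=\|Tx\|$ for all $\lambda$, i.e.\ $Tx\perp_B Tz$. Part (i) enters exactly here, guaranteeing $\|Tx\|=k$ and $Tx\neq0$ (and of course producing the face $G$ asserted in the statement).
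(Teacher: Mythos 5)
Your argument is correct and takes essentially the same route as the paper: both parts rest on Lemma~\ref{equi} and Lemma~\ref{subspace}, with a single supporting functional of $Tu_0$ (resp.\ $Tu_1$) transferred to all the other images because differences of points of $A$ (resp.\ of the $u_i$'s) lie in the kernel of the support functional of $F$ (resp.\ of $h$) and are therefore annihilated after applying $T$. The only deviation is your codimension-one count in (ii), which is superfluous: since $u_j-u_1\in\ker h$ and $T(\ker h)\subseteq\ker g_1$, one gets $g_1(Tu_j)=g_1(Tu_1)=k=\|Tu_j\|$ directly, which is exactly the paper's shortcut.
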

	\begin{proof}
		(i) Let $T\in \mathbb{L}(\mathbb{X},\mathbb{Y})$ preserve Birkhoff-James orthogonality on a set $A\subset F.$ Let $f$ be a support functional corresponding to $F.$ Suppose that $A=\{u_{\alpha}:\alpha\in\Lambda\},$ where $\Lambda $ is the index set. Since  $T$ preserves Birkhoff-James orthogonality on  $A,$ it follows that $T(\ker f)\subset(Tu_{\alpha})^{\perp_{B}}$ for each $\alpha\in \Lambda.$ From Lemma \ref{equi}, it follows that $\|Tu_{\alpha}\|=\|Tu_{\beta}\|=k$(say) for any $\alpha,\beta \in \Lambda.$ If $k=0$ then $T(A)=0.$ Let $k\neq0.$ Let $\beta\in \Lambda$ then  from \ref{subspace}, it follows that  there exists $g\in J(Tu_{\beta})$  such that $T(\ker f)\subset\ker g.$  We claim  that  $g\in J(Tu_{\alpha})$ for all $\alpha\in \Lambda.$ Clearly, $\mathbb{X}=\ker f \oplus \spn\{u_{\beta}\}.$ Hence for each $\alpha \in \Lambda $ there exists $h\in \ker f$ and $\lambda_{\alpha}\in \mathbb{R}$ such that $u_{\alpha}=h_{\alpha}+\lambda_{\alpha} u_{\beta}.$ Then 
		\[f(u_{\alpha})=f(h+\lambda_{\alpha} u_{\beta})
		\implies \lambda_{\alpha}=1~\text{ for all  } \alpha \in \Lambda.\]
		Thus,  for each $\alpha \in \Lambda $ there exists $h_{\alpha}\in \ker f$  such that $u_{\alpha}=h_{\alpha}+u_{\beta}.$ Then for each $\alpha\in\Lambda,$ 
		\[g(Tu_{\alpha})=g(T(h_{\alpha}+u_{\beta}))=g(Tu_{\beta})=\|Tu_{\beta}\|=\|Tu_{\alpha}\|.\]
		Thus, our claim is established that $g\in J(Tu_{\alpha})$ for each $\alpha\in \Lambda.$ Let $G$ be the face of $B_{\mathbb{Y}}$ supported by $g.$ Therefore, $\frac{T}{k} (A)\subset G.$ \\
		
		(ii)  Let  $u\in \co(A).$ Then $u=\sum\limits_{1\leq i\leq n}a_iu_i,$ where $u_i\in A$ and $a_i\in \mathbb{R}^+$ with $\sum\limits_{1\leq i\leq n}a_i=1.$ Let $u\perp_{B}z,$ then there exists  $f\in J(u)$ such that $f(z)=0.$ Now 
		\begin{eqnarray*}
			f(u)&=&\sum\limits_{1\leq i\leq n}a_if(u_i)\\
			\implies 1&=&\sum\limits_{1\leq i\leq n}a_if(u_i).
		\end{eqnarray*} This shows that $f(u_i)=1$ for all $1\leq i\leq n.$ So $f\in J(u_i)$ for all $1\leq i\leq n.$ 
		Since $T$ preserves Birkhoff-James orthogonality at $u_i$ and $u_i\in F$ for all $1\leq i\leq n,$ it follows from Lemma \ref{equi} that $\|Tu_i\|=\|Tu_j\|=k$(say) for all $1\leq i, j\leq n.$  If $k=0$ then it is easy to observe that $Tu=0$ and so $T$ preserves Birkhoff-James orthogonality at $u.$ Suppose that $k\neq 0.$
		From Lemma \ref{subspace}, it follows that $T(\ker f)\subset (Tu_1)^{\perp_{B}}.$ This shows that there exists $g\in J(Tu_1)$ such that $T(\ker f)\subset \ker g.$   Then by similar argument used in (i), we have $g\in J(Tu_i)$ for all $1\leq i\leq n.$ Now
		\begin{eqnarray*}
			\|Tu\|\geq g(Tu)&=& g\big(\sum\limits_{1\leq i\leq n}a_iTu_i\big)\\
			&=&\sum\limits_{1\leq i\leq n}a_ig(Tu_i)\\
			&=&\sum\limits_{1\leq i\leq n}a_i\|Tu_i\|\\
			&\geq& \|\sum\limits_{1\leq i\leq n}a_iTu_i\|
			=\|Tu\|.
		\end{eqnarray*}
		This shows that $g\in J(Tu).$ Also we have $Tz\in T(\ker f)\subset \ker g.$ Therefore, $Tu\perp_B Tz$ and this completes the proof.
		
	\end{proof}
	The extreme points of the unit ball of a Banach space play an important role in characterizing the isometries on a Banach space. The following theorem  illustrates their significance by offering a substantial refinement of the Blanco-Koldobsky-Turn\v{s}ek characterization of isometries.
	
	\begin{theorem}\label{extreme}
		Let $\mathbb{X}$ be a Banach space such that $B_{\mathbb{X}}=\overline{\co(\ext B_{\mathbb{X}})}$ and $\sm\mathbb{X}$ is a  dense $G_{\delta}$ subset of $\mathbb{X}.$ If $T\in \mathbb{L}(\mathbb{X})$ preserves Birkhoff-James orthogonality on $\ext B_{\mathbb{X}}$ then $T$ is a scalar multiple of an isometry, i.e., $\ext B_{\mathbb{X}}$ is a $\mathcal{K}$-set.
	\end{theorem}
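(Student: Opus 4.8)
The plan is to produce a dense subset of $\mathbb{X}$ on which $T$ preserves Birkhoff--James orthogonality and then invoke Theorem~\ref{Th preseves}.

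\emph{Step 1.} First I would observe that $T$ preserves Birkhoff--James orthogonality at every point $u$ of $C:=\co(\ext B_{\mathbb{X}})\cap S_{\mathbb{X}}$. Write such a $u$ as $u=\sum_{i=1}^{n}a_iu_i$ with $u_i\in\ext B_{\mathbb{X}}$, $a_i>0$, $\sum a_i=1$. If $u\perp_B z$, choose $f\in J(u)$ with $f(z)=0$; since $1=\|u\|=f(u)=\sum a_if(u_i)$ and each $f(u_i)\le 1$, necessarily $f(u_i)=1$ for every $i$, so all the $u_i$ lie in the proper face $F=\{x\in S_{\mathbb{X}}:f(x)=1\}$. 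Applying Theorem~\ref{faceToface}(ii) to the set $\{u_1,\dots,u_n\}\subset F$ (on which $T$ preserves Birkhoff--James orthogonality, these being extreme points) shows that $T$ preserves Birkhoff--James orthogonality on $\co\{u_1,\dots,u_n\}$, and in particular at $u$; thus $Tu\perp_B Tz$. By the homogeneity of Birkhoff--James orthogonality, $T$ then preserves it on the whole cone $D:=\{tx:t\ge 0,\ x\in C\}$.

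\emph{Step 2.} Next I would show that $D$ is dense in $\mathbb{X}$, equivalently that $C$ is dense in $S_{\mathbb{X}}$. Fix $x\in S_{\mathbb{X}}$ and $g\in J(x)$, and let $F_g=\{y\in S_{\mathbb{X}}:g(y)=1\}$ be the maximal face supported by $g$, which contains $x$. The hypothesis $B_{\mathbb{X}}=\overline{\co(\ext B_{\mathbb{X}})}$ should force $F_g=\overline{\co}\bigl(\ext B_{\mathbb{X}}\cap F_g\bigr)$, a Krein--Milman-type identity for the face $F_g$; granting this, $x$ is a limit of finite convex combinations of points of $\ext B_{\mathbb{X}}\cap F_g$, each of which has $g$-value $1$ and hence norm $1$, so lies in $C$. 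Thus $x\in\overline{C}$. Since $T$ preserves Birkhoff--James orthogonality on the dense set $D$ and $\sm\mathbb{X}$ is a dense $G_\delta$ subset of $\mathbb{X}$, Theorem~\ref{Th preseves} gives that $T$ is a scalar multiple of an isometry, i.e.\ $\ext B_{\mathbb{X}}$ is a $\mathcal{K}$-set.

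\emph{Main obstacle.} Step~1 is essentially Theorem~\ref{faceToface}(ii). The delicate point is Step~2: an element of $\co(\ext B_{\mathbb{X}})$ close to a unit vector $x$ typically has norm strictly less than $1$, and its radial projection onto $S_{\mathbb{X}}$ need not lie in $\co(\ext B_{\mathbb{X}})$, so one cannot normalise. One is forced to approximate $x$ using extreme points drawn from the face $F_g$ itself, and the real work is to justify $F_g=\overline{\co}(\ext B_{\mathbb{X}}\cap F_g)$ from the sole global input $B_{\mathbb{X}}=\overline{\co(\ext B_{\mathbb{X}})}$ (the extreme points approximating a point of $F_g$ a priori only have $g$-value close to $1$, not equal to $1$). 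A convenient reformulation: Step~1 together with Theorem~\ref{sm pre} already shows that $T$ preserves Birkhoff--James orthogonality on $\overline{C}\cap\sm\mathbb{X}$, so it suffices to prove that every smooth unit vector lies in $\overline{C}$, after which Corollary~\ref{sm k set} (rather than Theorem~\ref{Th preseves}) finishes the argument.
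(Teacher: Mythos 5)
Your Step 1 is essentially the paper's first step (the same use of Theorem \ref{faceToface}(ii), with the same observation that all the $u_i$ lie on a common face), and it is carried out correctly. The genuine gap is Step 2, and you have located it yourself: the assertion that $C=\co(\ext B_{\mathbb{X}})\cap S_{\mathbb{X}}$ is dense in $S_{\mathbb{X}}$, via the face identity $F_g=\overline{\co(\ext B_{\mathbb{X}}\cap F_g)}$, is nowhere justified. The global hypothesis $B_{\mathbb{X}}=\overline{\co(\ext B_{\mathbb{X}})}$ is not known to pass to individual maximal faces in an infinite-dimensional space (the approximating extreme points only have $g$-value close to $1$, exactly as you note), and no such Krein--Milman-type statement for faces is proved or used anywhere in the paper. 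So as written the proposal does not close; its central analytic step is missing, not merely deferred.

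The paper's proof shows that this obstacle is an artifact of insisting on approximating \emph{unit} vectors, i.e.\ of normalising before taking limits. After obtaining preservation on $C$ (your Step 1), the paper uses homogeneity of Birkhoff--James orthogonality in the point at which it is preserved to get preservation on $\co(\ext B_{\mathbb{X}})$ itself, and then applies Theorem \ref{sm pre} with $A=\co(\ext B_{\mathbb{X}})$: since $\overline{A}=B_{\mathbb{X}}$ is exactly the hypothesis, $T$ preserves Birkhoff--James orthogonality at every point of $B_{\mathbb{X}}\cap\sm\mathbb{X}$ --- and it does not matter that these smooth points typically have norm strictly less than $1$. A second application of homogeneity then upgrades this to all of $\sm\mathbb{X}$, and Corollary \ref{sm k set} finishes, just as in your reformulation. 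In short: scale \emph{after} the limiting step (Theorem \ref{sm pre}), not before it. Then you never need smooth unit vectors to lie in $\overline{C}$, only smooth points of the ball to lie in $\overline{\co(\ext B_{\mathbb{X}})}$, which is the hypothesis itself; the face-level Krein--Milman identity that you flag as the main obstacle is simply not needed.
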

	\begin{proof}
		Let $T\in \mathbb{L}(\mathbb{X})$ preserve Birkhoff-James orthogonality at each  point of $\ext 
        B_{\mathbb{X}}.$  Let $z\in  \co(\ext B_{\mathbb{X}})\cap S_{\mathbb{X}}.$  Then there exists $m\in \mathbb{N}$ such that
        \[z=\sum\limits_{1\leq i\leq m}a_iz_i,\text{ where }
        z_i\in \ext B_{\mathbb{X}}\text{ and }a_i\in \mathbb{R}^+\text{ with }\sum\limits_{1\leq i\leq n}a_i=1.\]  Consider a face $F$ of $B_{\mathbb{X}}$ containing $z.$ Clearly, $z_i\in F$ for all $1\leq i\leq n.$ Then it follows from Theorem \ref{faceToface} (ii) that $T$ preserves Birkhoff-James orthogonality  at $z.$ By the homogeneity of  Birkhoff-James orthogonality, we can say that $T$ preserves Birkhoff-James orthogonality  at each point of $\co(\ext B_{\mathbb{X}}).$   Now it follows from Theorem \ref{sm pre} that $T$ preserves Birkhoff-James orthogonality at each point of \[\overline{\co(\ext B_{\mathbb{X}})}\cap \sm \mathbb{X}=B_{\mathbb{X}}\cap  \sm \mathbb{X}.\] 
        Again from the homogeneity of Birkhoff James orthogonality, it follows that $T$ preserves  Birkhoff-James orthogonality  at each point of $\sm \mathbb{X}.$ Therefore, it follows from Corollary \ref{sm k set} that $T$ is an isometry multiplied by a constant.
	\end{proof}
    The following corollary is an immediate consequence of the above theorem.
    \begin{cor}\label{finite extreme}
		Let $\mathbb{X}$ be a finite-dimensional Banach space. Then $\ext B_{\mathbb{X}}$ is a $\mathcal{K}$-set.
	\end{cor}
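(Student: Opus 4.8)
The plan is to deduce Corollary~\ref{finite extreme} directly from Theorem~\ref{extreme} by checking that every finite-dimensional Banach space satisfies the two hypotheses of that theorem. Concretely, I would show (a) $B_{\mathbb{X}} = \overline{\co(\ext B_{\mathbb{X}})}$ and (b) $\sm\mathbb{X}$ is a dense $G_{\delta}$ subset of $\mathbb{X}$. Once both are verified, the conclusion that $\ext B_{\mathbb{X}}$ is a $\mathcal{K}$-set is immediate: any $T \in \mathbb{L}(\mathbb{X})$ preserving Birkhoff--James orthogonality on $\ext B_{\mathbb{X}}$ is, by Theorem~\ref{extreme}, a scalar multiple of an isometry, which is exactly the defining property of a $\mathcal{K}$-set in Definition~\ref{k-set}.

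For (a), since $\mathbb{X}$ is finite-dimensional, $B_{\mathbb{X}}$ is a compact convex set, so the Krein--Milman theorem gives $B_{\mathbb{X}} = \overline{\co(\ext B_{\mathbb{X}})}$; in fact by the Minkowski--Carath\'eodory theorem one even has $B_{\mathbb{X}} = \co(\ext B_{\mathbb{X}})$ without the closure, but the weaker statement with closure is all that Theorem~\ref{extreme} requires. For (b), I would recall the classical fact that in a finite-dimensional normed space the norm is a convex (hence locally Lipschitz) function on $\mathbb{X}$, so by Rademacher's theorem it is differentiable outside a set of Lebesgue measure zero; a point where the norm is differentiable is precisely a smooth point, so $\mathbb{X} \setminus \sm\mathbb{X}$ is Lebesgue-null and in particular of the first category, whence $\sm\mathbb{X}$ is dense. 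The $G_{\delta}$ property follows because $\sm\mathbb{X} = \{x \neq 0 : \operatorname{diam} J(x) \text{ is small}\}$ can be written as a countable intersection of open sets — for instance, $x$ is smooth iff for every $n$ there is a neighbourhood on which the norm's one-sided directional derivatives in a fixed spanning set of directions agree to within $1/n$, and each such condition defines an open set (using upper semicontinuity of $\rho'_+$ and lower semicontinuity of $\rho'_-$ in the first variable restricted appropriately). Alternatively one cites the standard result that the set of points of differentiability of a continuous convex function on a finite-dimensional space is a dense $G_{\delta}$.

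I do not expect any serious obstacle here, since the corollary is a pure specialization: the only thing to be careful about is writing the $G_{\delta}$ argument cleanly, because the paper's framework uses $J(z)$ and $\dim\spn J(z)$ rather than differentiability of the norm, so I would either translate between the two formulations or simply invoke the cited literature (e.g.\ the references on smooth points already listed in the introduction) for the statement that $\sm\mathbb{X}$ is a dense $G_{\delta}$ set in finite dimensions. The proof as it should appear in the paper is therefore just: \emph{Since $\mathbb{X}$ is finite-dimensional, $B_{\mathbb{X}} = \overline{\co(\ext B_{\mathbb{X}})}$ by the Krein--Milman theorem, and $\sm\mathbb{X}$ is a dense $G_{\delta}$ subset of $\mathbb{X}$; hence the hypotheses of Theorem~\ref{extreme} are met, and the conclusion follows.}
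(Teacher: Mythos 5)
Your proposal is correct and matches the paper exactly: the paper offers no separate argument, presenting the corollary as an immediate consequence of Theorem~\ref{extreme}, with the hypotheses $B_{\mathbb{X}}=\overline{\co(\ext B_{\mathbb{X}})}$ and $\sm\mathbb{X}$ dense $G_{\delta}$ holding in finite dimensions by Krein--Milman (indeed Minkowski) and the standard Mazur-type result on smooth points, just as you say. One small caution: your aside that a Lebesgue-null complement is ``in particular of the first category'' is not a valid implication in general, but it is harmless here since density already follows from the null complement and the $G_{\delta}$ property comes from the convexity/differentiability result you cite anyway.
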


	\begin{remark}
    \noindent
    \begin{itemize}
        \item[(i)]  For an infinite-dimensional  Banach space $\mathbb{X}$, the set $\ext B_{\mathbb{X}}$ may not be a $\mathcal{K}$-set. For example, suppose $\mathbb{X}= \ell_1\oplus_1c_0,$ where for any $\widetilde{x}=(x_1, x_2) \in \mathbb{X},$ $\|\widetilde{x}\|_{\mathbb{X}}= \|x_1\|_{\ell_1} + \|x_2\|_{c_0}$. From \cite[Th. 5]{DS08}, it follows that $\ext B_{\mathbb{X}}=\{(u,0): u\in \ext B_{\ell_1}\}.$ Consider the operator $T\in \mathbb{L}(\mathbb{X})$ defined by $T(x,y)=(0,y).$ Clearly, $\ext B_{\mathbb{X}}\in \ker T$ and so $T$ preserves Birkhoff-James orthogonality at each point of $\ext B_{\mathbb{X}},$ but $T$ is not a scalar multiple of an isometry. Therefore, $\ext B_{\mathbb{X}}$ is not a $\mathcal{K}$-set.
        \item[(ii)] For  a  Banach space  $\mathbb{X},$  if an operator $T\in \mathbb{L}(\mathbb{X})$ preserves $\rho$-orthogonality at each extreme point of $B_{\mathbb{X}}$ then $T$ may not be a scalar multiple of an isometry. For example, consider $\mathbb{X}=\ell_{\infty}^2,$ then  the operator $T\in \mathbb{L}(\mathbb{X})$ defined by $T(x,y)=(x+y,y-x)$ preserves $\rho$-orthogonality at each extreme point of $B_{\mathbb X}$ but $T$ is not a scalar multiple of an isometry. Also, from the example it is clear that the image of an extreme point of $B_{\mathbb{X}}$ may not be an extreme point of $B_{\mathbb{X}}$ under the preservation of $\rho$-orthogonality  at each extreme point of $B_{\mathbb X}.$ 
        
    \end{itemize}
		    
	\end{remark}
	 In the next theorem, we provide a necessary condition for the preservation of Birkhoff-James orthogonality by a bijective operator between polyhedral Banach spaces on the relative interior of any facet of the unit ball of the domain space.
	
	\begin{theorem}\label{int facet}
		Let $\mathbb{X},\mathbb{Y}$ be $n$-dimensional polyhedral Banach spaces and let $F$ be a facet of $B_{\mathbb{X}}.$ Let a bijective operator $T\in \mathbb{L}(\mathbb{X},\mathbb{Y})$ preserve Birkhoff-James orthogonality at each point of $\rint F.$ Let $u\in \rint F.$ Then $\frac{T}{\|Tu\|}(\rint F)\subset \rint G$ for some facet $G$ of $B_{\mathbb{Y}}.$
	\end{theorem}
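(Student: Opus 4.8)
The plan is to deduce the statement essentially from Theorem~\ref{faceToface}(i) together with an elementary dimension count. First I would apply Theorem~\ref{faceToface}(i) to the set $A=\rint F\subseteq F$. Since $T$ is bijective and $\rint F\subseteq S_{\mathbb{X}}$, no point of $\rint F$ lies in $\ker T$, so the alternative $T(\rint F)=\{0\}$ is excluded; hence there are a proper face $G$ of $B_{\mathbb{Y}}$ and a constant $k\in\mathbb{R}^{+}$ with $\frac{1}{k}T(\rint F)\subseteq G$. By Lemma~\ref{equi} (all points of $\rint F$ lie on the single face $F$), the quantity $\|Tv\|$ is the same for every $v\in\rint F$ and equals this $k$, so in fact $\frac{T}{\|Tu\|}(\rint F)\subseteq G$.

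Next I would pin down $\dim G$. Since $F$ is a facet of the $n$-dimensional ball $B_{\mathbb{X}}$, its affine hull $\aff F$ is an $(n-1)$-dimensional flat, and $\rint F$ is nonempty and relatively open in $\aff F$, whence $\aff(\rint F)=\aff F$. As $T$ is a linear bijection of finite-dimensional spaces, it is an affine homeomorphism preserving affine dimension, and scaling by the positive constant $\|Tu\|^{-1}$ is again such a map; therefore $\frac{T}{\|Tu\|}(\rint F)$ is nonempty and relatively open in the $(n-1)$-dimensional flat $\frac{1}{\|Tu\|}T(\aff F)$, and its affine hull is exactly that flat. The inclusion in $G$ then gives $\frac{1}{\|Tu\|}T(\aff F)=\aff\!\big(\frac{T}{\|Tu\|}(\rint F)\big)\subseteq\aff G$, so $\dim G\ge n-1$; on the other hand $G$ is a proper face of $B_{\mathbb{Y}}$, hence $G\subseteq S_{\mathbb{Y}}$ and $\dim G\le n-1$. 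Thus $\dim G=n-1$, so $G$ is a facet of $B_{\mathbb{Y}}$ and, moreover, $\aff G=\frac{1}{\|Tu\|}T(\aff F)$.

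To conclude, I would note that $\frac{T}{\|Tu\|}(\rint F)$ is a nonempty subset of $G$ that is relatively open in $\aff G$; hence each of its points has a relative neighbourhood in $\aff G$ contained in $G$, i.e.\ lies in $\rint G$. This gives $\frac{T}{\|Tu\|}(\rint F)\subseteq\rint G$, as required.

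The step most likely to need care is the dimension count: one must be sure that Theorem~\ref{faceToface} returns a \emph{proper} face $G$ (so that $\dim G\le n-1$) and that $\aff(\rint F)=\aff F$, the latter being precisely where relative openness of $\rint F$ in $\aff F$ is used. Beyond that I do not anticipate a genuine obstacle, since the bulk of the work is already carried out in Theorem~\ref{faceToface} and the remainder is routine finite-dimensional linear algebra and point-set topology (a linear bijection carries relative interiors to relative interiors and preserves affine dimension).
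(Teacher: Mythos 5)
Your proposal is correct and follows essentially the same route as the paper: both rest on Lemma~\ref{equi} and Theorem~\ref{faceToface}(i), use bijectivity of $T$ to force $\dim G=n-1$ (so $G$ is a facet) and to identify $\aff G$ with the image of $\aff F$, and finish by noting that the scaled image of the relatively open set $\rint F$ is relatively open in $\aff G$ and contained in $G$. The only differences are cosmetic: the paper exhibits $n$ linearly independent points of $F$ and invokes the open mapping theorem on an explicit neighbourhood, whereas you argue via preservation of affine dimension and relative openness under a linear bijection, and you make explicit the point (implicit in the paper) that the face $G$ supplied by Theorem~\ref{faceToface} is supported by a norm-one functional, hence has dimension at most $n-1$.
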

	
	\begin{proof}
		Let $\|Tu\|=k.$ From  Lemma \ref{equi} and Theorem \ref{faceToface} (i), it follows that $\|Tx\|=k$ for all $x\in \rint F$ and $\frac{Tx}{k}\in G$ for some face $G$ of $B_{\mathbb{X}}.$
		Thus, $\frac{T}{k}(F)\subset G.$  Since $F$ is a facet of $B_{\mathbb{X}},$ it follows that $F$ has $n$ linearly independent elements $x_1,x_2,\dots, x_n$ of $\mathbb{X}.$  For each $1\leq i\leq n,$ suppose that $Tx_i=y_i.$  Since $T$ is bijective, $y_1,y_2,\dots, y_n$ are  linearly independent elements of $G.$ Therefore, $G$ is a facet of $B_{\mathbb{Y}}.$ Let $T_1=\frac{T}{k}.$  Now we show that $T_1(\rint F)\subset \rint G.$ Let $v\in \rint F,$ then there exists an open set $D$ such that $v\in D\cap \aff(F)\subset F.$ Then 
        \[T_1v\in T_1(D\cap \aff(F))=T_1(D)\cap T_1(\aff(F)).\] 
        We claim that $T_1(\aff(F))=\aff(G).$ Since $T_1(F)\subset G,$  it follows that $T_1(\aff(F)) \subset \aff(G).$  Let $y\in \aff(G)=\aff(\{y_1,y_2,\dots, y_n\}),$ then there exist $a_i\in \mathbb{R}$ with $\sum\limits_{1\leq i\leq n}a_i=1$ such that
		\[ y=\sum\limits_{1\leq i\leq n}a_iy_i=\sum\limits_{1\leq i\leq n}a_iTx_i=T_1(\sum\limits_{1\leq i\leq n}a_ix_i)\subset T_1(\aff(F)).\] 
		Thus,  $T_1(\aff(F))=\aff(G).$    Next,
        \begin{eqnarray*}
          Tv\in T_1(D\cap \aff(F))&=&T_1(D)\cap T_1(\aff(F))\\
          &=&T_1(D)\cap \aff(G)\\
          &\subset& G.  
        \end{eqnarray*}
		Since $T_1$ is bijective, by the open mapping theorem we have $T_1(D)$ is an open set of $\mathbb{Y}.$ Thus, $T_1v\in \rint G.$ Therefore,  $T_1(\rint F)\subset \rint G$ and this completes the proof.
		
	\end{proof}
	
	The following result follows immediately from the above theorem.
	
	\begin{cor}
		Let $\mathbb{X},\mathbb{Y}$ be $n$-dimensional polyhedral Banach spaces and $F$ be a facet of $B_{\mathbb{X}}.$ Let a bijective operator $T\in \mathbb{L}(\mathbb{X},\mathbb{Y})$ preserve Birkhoff-James orthogonality at each point of $F\cap \sm\mathbb{X}.$ Let $u\in F\cap \sm\mathbb{X}.$ Then $\frac{T}{\|Tu\|}(F\cap \sm\mathbb{X})=G\cap \sm\mathbb{Y}$ for some facet $G$ of $B_{\mathbb{Y}}.$
	\end{cor}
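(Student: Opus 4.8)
The plan is to derive both inclusions of the claimed equality from Theorem~\ref{int facet}, after first rewriting $F\cap\sm\mathbb{X}$ as the relative interior $\rint F$. Since $\mathbb{X}$ is $n$-dimensional polyhedral and $F$ is a facet (an $(n-1)$-face) of $B_{\mathbb{X}}$, I claim $F\cap\sm\mathbb{X}=\rint F$: if $z\in F$ is smooth then $J(z)$ is a singleton, so by \cite{SSP24} $z$ lies in the relative interior of some facet $F'$; every face of $B_{\mathbb{X}}$ containing $z$ contains $F'$, and since $F$ is a facet this forces $F'=F$, whence $z\in\rint F$. Conversely every point of $\rint F$ is smooth by the same characterization. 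The same holds in $\mathbb{Y}$, so, with $T_1:=\frac{1}{\|Tu\|}T$, the assertion reduces to $T_1(\rint F)=\rint G$ for a suitable facet $G$ of $B_{\mathbb{Y}}$.

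The inclusion $T_1(\rint F)\subseteq\rint G$ --- together with the existence of $G$ and the fact that $G$ is a facet --- is exactly the content of Theorem~\ref{int facet}. From its proof I additionally retain that $T_1$ restricts to an affine homeomorphism of the affine hyperplane $\aff F$ onto $\aff G$, that $T_1(F)\subseteq G$, and hence that $\aff(T_1 F)=\aff G$ and that $T_1(\rint F)$ equals the relative interior of $T_1(F)$ within $\aff G$.

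For the reverse inclusion I would pass to $T^{-1}$. Fix $x\in\rint F$; then $x$ is smooth, $x^{\perp_B}=\ker f$ where $J(x)=\{f\}$, and by the first inclusion $T_1x\in\rint G$, so if $g$ is the support functional of $G$ then $J(T_1x)=\{g\}$ and $(T_1x)^{\perp_B}=\ker g$. Preservation of Birkhoff--James orthogonality at $x$ gives $T(\ker f)\subseteq\ker g$; both are hyperplanes and $T$ is bijective, so $T(\ker f)=\ker g$, hence $T^{-1}(\ker g)=\ker f$ and (using homogeneity) $T^{-1}$ preserves Birkhoff--James orthogonality at $T_1x$. Thus $T^{-1}$ preserves Birkhoff--James orthogonality at every point of $T_1(\rint F)$. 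If this could be upgraded to preservation at every point of $\rint G$, then Theorem~\ref{int facet} applied to the bijection $T^{-1}:\mathbb{Y}\to\mathbb{X}$ and the facet $G$ would give $T_1^{-1}(\rint G)\subseteq\rint F$, i.e. $\rint G\subseteq T_1(\rint F)$, completing the proof.

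That upgrade is the step I expect to be the main obstacle, and it is equivalent to the surjectivity statement $T_1(F)=G$ rather than merely $T_1(F)\subseteq G$: since for a facet one has $S_{\mathbb{X}}\cap\aff F=F$, the bound $\|T_1^{-1}y\|\ge1$ is automatic for $y\in\aff G$, and what is missing is $\|T_1^{-1}y\|\le1$ on $\rint G$, equivalently $\rint G\subseteq T_1(B_{\mathbb{X}})\cap\aff G=T_1(F)$. The natural attack is to use the polyhedral structure: $T_1(F)$ is a compact convex subset of $G$ with $\aff(T_1F)=\aff G$, so it is full-dimensional in $G$, and one would try to exclude $T_1(F)\subsetneq G$ by combining this with the identity $T^{-1}(\ker g)=\ker f$ and the face lattice of $B_{\mathbb{Y}}$. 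I regard this as the genuinely delicate point; failing it, one still obtains the inclusion $\frac{1}{\|Tu\|}T\,(F\cap\sm\mathbb{X})\subseteq G\cap\sm\mathbb{Y}$ directly from Theorem~\ref{int facet}.
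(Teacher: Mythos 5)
Your reduction of $F\cap\sm\mathbb{X}$ to $\rint F$ (and of $G\cap\sm\mathbb{Y}$ to $\rint G$) is correct, and your derivation of the inclusion $\frac{T}{\|Tu\|}(F\cap\sm\mathbb{X})\subseteq G\cap\sm\mathbb{Y}$ is exactly what the paper does: the paper offers no separate proof and simply declares the corollary an immediate consequence of Theorem \ref{int facet}, which indeed yields only that inclusion. So up to the forward inclusion you and the paper coincide.

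The reverse inclusion, which you correctly isolate as the missing step (equivalently, $T_1(F)=G$), is a genuine gap in your proposal as a proof of the stated equality --- but your hesitation is the right instinct, because that step cannot be supplied under the stated hypotheses. Take $\mathbb{X}=\mathbb{Y}=\ell_{\infty}^2$, $F=\{(1,t):|t|\le 1\}$ and $T(x,y)=(x,\tfrac{y}{2})$. Then $T$ is bijective; every $u=(1,t)$ with $|t|<1$ is smooth with $u^{\perp_B}=\spn\{(0,1)\}$, and $Tu=(1,\tfrac{t}{2})\perp_B(0,\tfrac{1}{2})=T(0,1)$, so $T$ preserves Birkhoff-James orthogonality at every point of $F\cap\sm\mathbb{X}$, and $\|Tu\|=1$. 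Yet $\frac{T}{\|Tu\|}(F\cap\sm\mathbb{X})=\{(1,s):|s|<\tfrac{1}{2}\}$ is a proper subset of $\rint F$ and coincides with $G\cap\sm\mathbb{Y}$ for no facet $G$ of $B_{\mathbb{Y}}$. Hence the equality asserted in the corollary does not follow from Theorem \ref{int facet} and in fact fails in general; it would require stronger hypotheses (for instance preservation on all of $\sm\mathbb{X}$, or the analogous preservation hypothesis on $T^{-1}$ on $\rint G$, which is precisely the ``upgrade'' you could not justify). What you actually proved --- the inclusion --- is the part of the statement that is true, and it is all that the paper's own one-line justification establishes.
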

	
	\begin{remark}
		We note that in Theorem \ref{int facet}, if the operator $T$ fails to be bijective then the theorem may not hold. For example, let the Banach space $\mathbb{X}=\ell_{\infty}^{2}$ and let the operator $T\in \mathbb{L}(\mathbb{X})$ defined by $T(x,y)=(x,x)$ for all $(x,y)\in\mathbb{X}.$ Clearly, $T$ is not bijective. Now consider the facet $F=\{(1,a): |a|\leq 1\}.$ It is easy to observe that $T$ preserves Birkhoff-James orthogonality at each point of $\rint F$ but there does not exist any facet $G$ of the unit ball of $\mathbb{X}$ such that $T(\rint F)\subset \rint G.$ 
	\end{remark}
	
	Our next goal is to show that for any fixed $1\leq k\leq n$ the set of all $k$-smooth points of the unit ball of an $n$-dimensional polyhedral Banach space is also a $\mathcal{K}$-set. For this purpose, we have two nice and easy observations corresponding to a facet of the unit ball of the concerned space.
	
	\begin{proposition}\label{Prop sub face}
		Let $\mathbb{X}$ be an $n$-dimensional polyhedral Banach space and let $1\leq k\leq n.$ Let $F$ be a $k$-face of $B_{\mathbb{X}}$ and let $x\in \ext F.$ Then there exists a $(k-1)$-face $G$ of $B_{\mathbb{X}}$ such that $x\in G \subset F.$
	\end{proposition}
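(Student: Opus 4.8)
The plan is to reduce the statement to two elementary facts about polytopes, exploiting that $B_{\mathbb{X}}$ is a polytope (since $\ext B_{\mathbb{X}}$ is finite) and that the face relation is transitive.

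First, $F$ is itself a $k$-dimensional polytope: being a face of $B_{\mathbb{X}}$ and $\ext B_{\mathbb{X}}$ being finite, $F=\co(\ext B_{\mathbb{X}}\cap F)$ is the convex hull of a finite set, while $\dim\aff(F)=k$ by hypothesis. Second, the face relation is transitive: if $G$ is a face of $F$, then $G$ is a face of $B_{\mathbb{X}}$, because for $u,v\in B_{\mathbb{X}}$ with $(1-t)u+tv\in G$ and $0<t<1$, the inclusion $G\subseteq F$ together with $F$ being a face of $B_{\mathbb{X}}$ forces $u,v\in F$, whence $u,v\in G$ since $G$ is a face of $F$.

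The only step with real content is to produce a $(k-1)$-face of $F$ containing $x$. Working inside the $k$-dimensional affine space $\aff(F)$, in which $F$ is full-dimensional, I would write $F$ as an irredundant intersection of finitely many closed half-spaces; their bounding hyperplanes $H_1,\dots,H_N$ then satisfy that each $F_i:=H_i\cap F$ is a facet of $F$ with $\dim F_i=k-1$, and the relative boundary of $F$ (inside $\aff(F)$) equals $\bigcup_{i=1}^N F_i$. Since $x\in\ext F$ and $k\geq 1$, the point $x$ does not lie in $\rint F$, hence $x\in F_j$ for some $j$; moreover $F_j$, being the intersection of $F$ with a supporting hyperplane, is a face of $F$. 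Taking $G=F_j$ and applying transitivity, $G$ is a $(k-1)$-face of $B_{\mathbb{X}}$ with $x\in G\subseteq F$, as required. The only (mild) obstacle is this last step, i.e., making precise that a vertex of a $k$-polytope necessarily lies on a $(k-1)$-dimensional facet; the remainder is routine manipulation of the definition of a face.
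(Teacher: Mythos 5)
Your proposal is correct and, at its core, follows the same route as the paper: both arguments reduce the statement to the standard polytope fact that a vertex of a $k$-dimensional polytope lies on a $(k-1)$-dimensional face, and then transfer that face back to $B_{\mathbb{X}}$ via the face relation. The differences are in the bookkeeping. The paper translates $F$ by $-x$ into the $k$-dimensional subspace $\mathbb{V}=\spn\{u-v:u,v\in F\}$, simply \emph{posits} a $(k-1)$-face $G'$ of the translated polytope containing the vertex $0$ (precisely the step you isolate as the only one with real content, for which the paper offers no argument), and then devotes its effort to verifying by hand that the translate $G=\{w+x:w\in G'\}$ is a face of $F$, concluding without further comment that it is therefore a $(k-1)$-face of $B_{\mathbb{X}}$ (transitivity left implicit). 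You instead work directly inside $\aff(F)$, justify the key step through an irredundant half-space description of $F$ (the relative boundary is the union of the facets $H_i\cap F$, each of dimension $k-1$, and an extreme point cannot lie in $\rint F$ once $k\geq 1$), obtain $G=H_j\cap F$ at once as a supporting-hyperplane section, hence a face of $F$, and spell out the transitivity argument. So your write-up is, if anything, more complete than the paper's on the one substantive point, at the cost of invoking standard facts about irredundant H-representations of polytopes; both treatments are legitimate and essentially equivalent in depth.
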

	\begin{proof}
		Let $F$ be a $k$-face of $B_{\mathbb{X}}.$  Consider the subspace $\mathbb{V}=\spn\{u-v:u,v\in F\}.$ Clearly, $\dm\mathbb{V}=k.$ Then $F'=\spn\{u-x:u\in F\}$ is a convex polyhedron contained in $\mathbb{V}$ of dimension $k.$  Since $x$ is an extreme point of $F,$ it follows that $0$ is an extreme point of $F'.$  Let $G'$ be a $(k-1)$-face of $F'$ containing $0.$ Now let us consider the set $G=\spn\{w+x:w\in G'\}.$ Then clearly $G\subset F$ is a convex polyhedron of dimension $k-1$ containing $x.$
		Now we show that $G$ is a face of $F.$ Let $y\in G $ be such that $y=(1-t)y_1+ty_2$ for  $y_1,y_2\in F$ and for  $t\in (0,1).$ Now $y\in G\implies y-x\in G'$ and $y_1,y_2\in F\implies y_1-x,y_2-x\in F'.$ This shows that 
		\[y-x=(1-t)(y_1-x)+t(y_2-x).\]
		Since $G'$ is a face of $F',$ it follows that $y_1-x,y_2-x\in G'$ and so $y_1,y_2\in G.$ Thus, $G$ is a face of $F$ of dimension $k-1$ containing $x$ and consequently, $G$ is a $(k-1)$-face of $B_{\mathbb{X}}$ such that $x\in G \subset F.$  
	\end{proof}
	
	\begin{proposition}\label{k-sm closer}
		Let $\mathbb{X}$ be an $n$-dimensional polyhedral Banach space. Let $F$ be a facet of $B_{\mathbb{X}}.$ Then $\ext F\subset \overline{\ksm \mathbb{X}\cap F}$ for any $k<n.$ 
	\end{proposition}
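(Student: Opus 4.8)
The plan is to fix an extreme point $x\in\ext F$ and show that $x$ belongs to $\overline{\ksm\mathbb{X}\cap F}$; since $x$ is an arbitrary extreme point of $F$, this yields the asserted inclusion. The two ingredients are Proposition \ref{Prop sub face}, which lets us descend through faces one dimension at a time while retaining a prescribed extreme point, and the characterization from \cite{SSP24} that a point of $S_{\mathbb{X}}$ is $k$-smooth precisely when it lies in the relative interior of an $(n-k)$-face of $B_{\mathbb{X}}$.

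First I would build a chain of faces of $B_{\mathbb{X}}$ all containing $x$. Since $F$ is a facet it is an $(n-1)$-face; set $G_0=F$, so $x\in\ext G_0$. Assume inductively that we have an $(n-i)$-face $G_{i-1}$ of $B_{\mathbb{X}}$ with $x\in G_{i-1}$ and $x\in\ext G_{i-1}$. As long as $\dm G_{i-1}=n-i\geq 1$, which holds for every $i$ with $1\le i\le k-1$ because $k\le n-1$, Proposition \ref{Prop sub face} produces an $(n-1-i)$-face $G_i$ with $x\in G_i\subset G_{i-1}$; since $x$ is extreme in $G_{i-1}$ and $G_i\subseteq G_{i-1}$, it is automatically extreme in $G_i$, so the induction continues. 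Running this for $i=1,\dots,k-1$ yields an $(n-k)$-face $G:=G_{k-1}$ of $B_{\mathbb{X}}$ with $x\in G\subset F$; when $k=1$ one simply takes $G=F$ with no descent.

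Finally, by the characterization of $k$-smooth points in a polyhedral space, every point of $\rint G$ is a $k$-smooth point of $\mathbb{X}$, and $\rint G\subset G\subset F$, hence $\rint G\subset\ksm\mathbb{X}\cap F$. Since $G$ is a convex polytope of dimension $n-k\geq 1$, its relative interior is dense in it, so $x\in G=\overline{\rint G}\subset\overline{\ksm\mathbb{X}\cap F}$, which completes the argument. I do not expect a real obstacle here; the only points requiring care are the observation that an extreme point of a face remains extreme in every subface containing it (so that Proposition \ref{Prop sub face} can be iterated) and the index bookkeeping guaranteeing that exactly $k-1$ descents are available before the dimension collapses, both of which follow at once from $1\le k\le n-1$. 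One could equivalently organize the middle paragraph as a downward induction on $k$.
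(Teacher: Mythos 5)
Your proof is correct and follows essentially the same route as the paper: iterate Proposition \ref{Prop sub face} to obtain an $(n-k)$-face $G$ of $B_{\mathbb{X}}$ with $x\in G\subset F$, and then approximate $x$ by points of $\rint G$, which lie in $\ksm\mathbb{X}\cap F$ by the characterization of $k$-smoothness in polyhedral spaces from \cite{SSP24}. The only difference is the concluding step, where the paper writes down the explicit sequence $x_m=\big(1-\frac{1}{m}\big)x+\frac{1}{m}y$ with $y\in\rint G$ and verifies $J(x_m)=J(y)$ directly, while you instead invoke the standard convexity fact that $\rint G$ is dense in $G$; both yield the same conclusion, and your explicit bookkeeping of the repeated application of Proposition \ref{Prop sub face} (noting that $x$ stays extreme in each subface) is a point the paper leaves implicit.
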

	\begin{proof}
		Let $x\in \ext F.$ From Proposition \ref{Prop sub face}, it follows that there exists a $(n-k)$-face $G$ of $B_{\mathbb{X}}$ such that $x\in G\subset F.$ Let $y\in \rint G$ and so $ y\in\ksm\mathbb{X}\cap F$ such that $J(y)\subset J(x).$ Consider the sequence $\{x_n\},$ where $x_n=(1-\frac{1}{n})x+\frac{1}{n}y$ for all $n\in\mathbb{N}.$ Clearly, $x_n\in F$ for all $n\in\mathbb{N}$ and $x_n\longrightarrow x,$ as $n\longrightarrow \infty.$ Now we show that $J(x_n)=J(y)$ for all $n\in\mathbb{N}.$ Let $f\in J(y),$ then $f\in J(x).$ Hence 
        \begin{eqnarray*}
            f(x_n)&=&f\bigg(\Big(1-\frac{1}{n}\Big)x+\frac{1}{n}y\bigg)\\
            &=&\Big(1-\frac{1}{n}\Big)+\frac{1}{n}=1
        \end{eqnarray*}
		Thus, $f\in J(x_n)$ and so $J(y)\subset J(x_n)$  for all $n\in\mathbb{N}.$ Next, let $g\in J(x_n),$ then 
        \begin{eqnarray*}
            1=g(x_n)&=&\Big(1-\frac{1}{n}\Big)g(x)+\frac{1}{n}g(y)\\
            &\leq& \Big(1-\frac{1}{n}\Big)+\frac{1}{n}=1.
        \end{eqnarray*}
		This implies that $g(y)=1=\|y\|$ and so $g\in J(y).$ Therefore, $J(x_n)\subset J(y)$ for all $n\in \mathbb{N}.$ Thus, $J(x_n)=J(y)$ for all $n\in\mathbb{N}.$ Therefore, $\{x_n\}\subset \ksm\mathbb{X}\cap F$ and hence $x\in \overline{\ksm\mathbb{X}\cap F}.$
	\end{proof}

    From Corollary \ref{finite extreme}, it follows that any operator that preserves Birkhoff-James orthogonality on the set of all $n$-smooth points of an $n$-dimensional polyhedral Banach space must be an isometry. We are now prepared to extend this result by showing that for any $1\leq k\leq n,$  Birkhoff-James orthogonality preservation on the set of all $k$-smooth points similarly ensures the operator is an isometry.
	\begin{theorem}\label{k-SM}
		Let $\mathbb{X}$ be a finite-dimensional polyhedral Banach space and let $1\leq k\leq \dim \mathbb{X}.$ If $T\in \mathbb{L}(\mathbb{X})$ preserves Birkhoff-James orthogonality at each $k$-smooth point of $B_{\mathbb{X}}$ then $T$ is a scalar multiple of an isometry, i.e., $\ksm\mathbb{X}$ is a $\mathcal{K}$-set.
	\end{theorem}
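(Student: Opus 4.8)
The plan is to reduce the statement to Corollary~\ref{sm k set} by proving that a non-zero $T$ (the case $T=0$ being immediate) preserves Birkhoff--James orthogonality at every point of $\sm\mathbb{X}$. Two facts make this a sensible target. First, in an $n$-dimensional polyhedral space $\sm\mathbb{X}$ is the union of the cones over the relative interiors of the facets of $B_{\mathbb{X}}$, so it is open and dense in $\mathbb{X}$, hence a dense $G_\delta$ set. Second, the case $k=n$ needs no new work, since the $n$-smooth points of $B_{\mathbb{X}}$ are exactly the relative interiors of the $0$-faces, i.e.\ the extreme points of $B_{\mathbb{X}}$, so the assertion for $k=n$ is precisely Corollary~\ref{finite extreme}. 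I would therefore assume $1\le k<n$ and work facet by facet.

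Fix a facet $F$ of $B_{\mathbb{X}}$ with support functional $f$, and set $A=\ksm\mathbb{X}\cap F$. By Proposition~\ref{k-sm closer}, $\ext F\subset\overline{A}$; since $F=\co(\ext F)$ and, in finite dimensions, $\overline{\co(A)}=\co(\overline{A})$, it follows that $\co(A)$ is dense in $F$. As $T$ preserves Birkhoff--James orthogonality at each point of $A\subset F$, Theorem~\ref{faceToface} applies to $A$. If $T(A)=0$, then $T(\co(A))=0$ by linearity and $T(F)=0$ by continuity; since the affine hull of the facet $F$ is an $(n-1)$-dimensional flat not containing $0$, its linear span is all of $\mathbb{X}$, so this would force $T=0$, contrary to assumption. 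Hence $T(A)\neq 0$, and proceeding as in the proof of Theorem~\ref{faceToface}(i), together with Lemma~\ref{equi}, one obtains a constant $c>0$ with $\|Tx\|=c$ for every $x\in A$ and a single functional $g\in S_{\mathbb{X}^*}$ with $T(\ker f)\subset\ker g$ and $g\in J(Tx)$ for every $x\in A$. The next step is to spread these facts over the whole facet: for $x=\sum_i a_i u_i\in\co(A)$ one has $g(Tx)=\sum_i a_i g(Tu_i)=c$ and $c\le g(Tx)\le\|Tx\|\le\sum_i a_i\|Tu_i\|=c$, so $\|Tx\|=c$ and $g\in J(Tx)$ on $\co(A)$; by continuity of $x\mapsto g(Tx)$ and of $x\mapsto\|Tx\|$, the same holds on $\overline{\co(A)}=F$.

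I would then finish by showing $T$ preserves Birkhoff--James orthogonality at each $v\in\rint F$. Such a $v$ is a smooth point of $B_{\mathbb{X}}$, so $J(v)=\{f\}$ and $v^{\perp_B}=\ker f$; and since $g\in J(Tv)$ we have $\ker g\subset (Tv)^{\perp_B}$. Therefore $T(v^{\perp_B})=T(\ker f)\subset\ker g\subset (Tv)^{\perp_B}$, which is exactly preservation of Birkhoff--James orthogonality at $v$. As $F$ ranges over all facets and every smooth point of $S_{\mathbb{X}}$ lies in the relative interior of some facet, $T$ preserves Birkhoff--James orthogonality at each point of $\sm\mathbb{X}\cap S_{\mathbb{X}}$, hence at each point of $\sm\mathbb{X}$ by homogeneity. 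Corollary~\ref{sm k set} then yields that $T$ is a scalar multiple of an isometry.

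The genuinely new difficulty, which appears precisely when $k>1$, is that the $k$-smooth points of a facet $F$ form only a dense subset of $F$, not the whole of $F$, and Theorem~\ref{sm pre} cannot be used to close this gap, because the points it recovers in the closure must already be smooth while the extreme points of $F$ are not. The device that overcomes it is the step above: extract from Theorem~\ref{faceToface}(i) one support functional $g$ valid simultaneously on all of $A$, and transport the scalar identities $g(Tx)=\|Tx\|$ and $\|Tx\|=c$ to $\overline{\co(A)}=F$ by continuity; once $g\in J(Tx)$ and $\|Tx\|$ is constant on $F$, smoothness of the points of $\rint F$ makes orthogonality preservation there automatic. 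The remaining pieces — the case $k=n$, and the density and $G_\delta$ structure of $\sm\mathbb{X}$ — are routine.
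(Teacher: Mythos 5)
Your proof is correct, and it shares the facet-level toolkit with the paper (Lemma~\ref{equi}, Theorem~\ref{faceToface}(i), Proposition~\ref{k-sm closer}, and the identity $F=\co(\ext F)$), but the endgame is genuinely different. The paper never returns to the smooth-point results: having shown $\|Tu\|=l_F$ on $\overline{\ksm\mathbb{X}\cap F}$ and hence, via $\frac{1}{l_F}T(F)\subset G$, on all of the facet $F$, it concludes directly that the constants $l_F$ agree across facets because adjacent facets share extreme points (each extreme point lies in at least $\dim\mathbb{X}$ facets), so $\|Tu\|$ is constant on $S_{\mathbb{X}}$, which is already the assertion that $T$ is a scalar multiple of an isometry. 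You instead extract from the proof of Theorem~\ref{faceToface}(i) the common supporting functional $g$ with $T(\ker f)\subset\ker g$ and $g\in J(Tx)$ for all $x$ in the facet, deduce preservation of Birkhoff--James orthogonality at every point of $\rint F$, hence at every smooth point, and then invoke Corollary~\ref{sm k set} (and through it the norm-derivative argument of Theorem~\ref{sep iso}). Your route buys a clean structural statement — on each facet the operator preserves orthogonality at \emph{all} smooth points, not just the $k$-smooth ones — and exhibits the theorem as a literal corollary of the smooth-point $\mathcal{K}$-set result, at the cost of carrying the functional $g$ and the density of $\co(\ksm\mathbb{X}\cap F)$ in $F$; the paper's route is more self-contained at the end, needing only norm constancy and the connectivity of the facet structure. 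Your separate treatment of $k=n$ via Corollary~\ref{finite extreme} (needed because Proposition~\ref{k-sm closer} is stated for $k<n$) and your elimination of the degenerate case $T(A)=0$ using $\spn F=\mathbb{X}$ are both sound.
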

	\begin{proof}
		If $T=0,$ the theorem follows trivially.
		Let $T(\neq 0)\in \mathbb{L}(\mathbb{X})$ preserve Birkhoff-James orthogonality on $\ksm\mathbb{X}.$ Let $F$ be a facet of $B_{\mathbb{X}}.$ From Lemma \ref{equi}, it follows that $\|Tu_1\|=\|Tu_2\|=l_{F}$(say) for any $u_1,u_2\in \ksm\mathbb{X}\cap F$ and so $\|Tu\|=l_{F}$ for all $u\in \overline{\ksm\mathbb{X}\cap F}.$ Then by Proposition \ref{k-sm closer}, we
		have $\|Tu\|=l_{F}$ for any $u\in \ext F.$
		Next, it follows from  Theorem \ref{faceToface} (i)  that  $\frac{1}{l_{F}}T(\ksm\mathbb{X}\cap F)\subset G$ for some face $G$ of $B_{\mathbb{X}}.$ Thus,
        \begin{eqnarray*}
            \frac{1}{l_{F}}T(F)&=&\frac{1}{l_{F}}T(\co(\ext F))\\
            &=&\co(\frac{1}{l_{F}}T(\ext F))\\
            &\subset& \co(\frac{1}{l_{F}}T(\overline{\ksm\mathbb{X}\cap F}))\\
            &\subset& G.
        \end{eqnarray*}
		Therefore, $\|Tu\|=l_{F}$ for all $u\in F.$  Since $F$ is arbitrary, we can say that this is true for any  facet  of $B_{\mathbb{X}}.$
 Note that each extreme point of $B_{\mathbb{X}}$ belongs to the intersection of at least $n(=\dm\mathbb{X})$ number of distinct  facets and it is easy to observe that for any facet $F$ of $B_{\mathbb{X}},$ $\ext F\subset \ext B_{\mathbb{X}}.$  Thus,  we can conclude that $\|Tu\|$ is constant for all $u\in S_{\mathbb{X}}$ and this completes the proof.
	\end{proof}
	\begin{remark}
		
		In general, for an $n$-dimensional Banach space, the above theorem may not hold because $k$-smooth points may not exist for all $1 < k \leq n$. Even if such points exist for some $1 < k \leq n$, the set $\ksm\mathbb{X}$ may still fail to be a $\mathcal{K}$-set. For example, consider the 3-dimensional Banach space $\mathbb{X},$ whose unit sphere is described by the following figure.
		\begin{center}
			\begin{tikzpicture}
				\shade[ball color = blue!30, opacity = 0.8] (0,2) -- (3,0) -- (0,-2)--(-3,0)--(0,2);
				\draw [rotate=90,dashed,blue] (-2,0) arc (180:360:2 and 0.4);
				\draw[rotate=90,blue,thick] (2,0) arc (0:180:2 and 0.4);
				\fill[rotate=90,red!40,opacity=0.2] (0,0)  ellipse (2 and 0.4);
				\draw[blue](0,2)--(3,0);
				\draw[blue](0,-2)--(3,0);
				\draw[blue](0,2)--(-3,0);
				\draw[blue](0,-2)--(-3,0);
				\draw[->, red,thick ](0,0)--(0,3.5);
				\draw[->, red,thick ](0,0)--(4.5,0);
				\draw[->, red,thick ](0,0)--(-2,-2);
				\node [black] at (0,4) {$Z$};	
				\node [black] at (5, 0) {$X$};	
				\node [black] at (-2.5, -2.5) {$Y$};	
				
				\fill[black] (3,0) circle(1pt);
				\node [black,scale=.7] at (3.3,-0.25) {$(1,0,0)$};
				\fill[black] (0,0) circle(1pt);
				\node [black] at (0.07,-0.2) {$0$};
				\fill[black] (0,2) circle(1pt);
				\node [black,scale=.7] at (0.5,2.1) {$(0,0,1)$};
				\fill[black] (-.4,-.4) circle(1pt);
				\node [black,scale=.7] at (-.9,-.3) {$(0,1,0)$};
				
			\end{tikzpicture}
		\end{center}
		This unit sphere is a surface of revolution obtained by rotating a square with vertices $\{\pm(1,0,0), \pm(0,0,1)\}$ around the $X$-axis.
		Here $2\text{-Sm~}\mathbb{X}=\{(0,b,c): b,c\in\mathbb{R},(0,b,c)\neq(0,0,0) \}$  and $3\text{-Sm~}\mathbb{X}=\{(a,0,0): a\in\mathbb{R}\setminus\{ 0\}\}.$ Since $\dim\spn(2\text{-Sm~}\mathbb{X})=2$ and $\dim\spn(3\text{-Sm~}\mathbb{X})=1,$ it follows from \cite[Prop. 2.1]{SMP24} that $2\text{-Sm~}\mathbb{X}$ and $3\text{-Sm~}\mathbb{X}$ are not $\mathcal{K}$-sets.
	\end{remark}

	We end this article by providing the following characterization of isometries on a finite-dimensional polyhedral Banach space, which easily follows from  Theorem \ref{norm-derivatives} and Theorem \ref{k-SM}.
	\begin{theorem}
		Let $\mathbb{X}$ be a finite-dimensional polyhedral Banach space. Let $T\in \mathbb{L}(\mathbb{X}),$ then the following results are equivalent:
		\begin{itemize}
			
			\item[(i)]$T$ preserves Birkhoff-James orthogonality at each $k$-smooth point of $S_{\mathbb{X}},$ where $1\leq k\leq \dim \mathbb{X}.$
            \item[(ii)] $T$ preserves Birkhoff-James orthogonality at each point of  $U,$ where $U$ is a dense subset of $S_{\mathbb{X}}.$
			\item[(iii)] $T$ preserves $\rho$-orthogonality at each smooth point of $S_{\mathbb{X}}.$
			\item[(iv)] $T$ preserves $\rho_+$-orthogonality at each smooth point of $S_{\mathbb{X}}.$
			\item[(v)] $T$ preserves $\rho_-$-orthogonality at each smooth point of $S_{\mathbb{X}}.$
			\item[(vi)] $T$ is a scalar multiple of an isometry.
			
		\end{itemize} 
	\end{theorem}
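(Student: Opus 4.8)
The strategy is to show the cycle of implications
$(\mathrm{i})\Rightarrow(\mathrm{vi})$, $(\mathrm{ii})\Rightarrow(\mathrm{vi})$, $(\mathrm{vi})\Rightarrow$ everything, and the norm-derivative variants $(\mathrm{iii}),(\mathrm{iv}),(\mathrm{v})\Rightarrow(\mathrm{vi})$, invoking the two main engines already established: Theorem~\ref{k-SM} (which handles $k$-smooth points in a finite-dimensional polyhedral space) and Theorem~\ref{norm-derivatives} together with Corollary~\ref{sm k set} and Theorem~\ref{Th preseves} (which handle dense subsets and the norm-derivative orthogonalities, since a finite-dimensional polyhedral space has $\sm\mathbb{X}$ a dense $G_\delta$ set — indeed the complement of $\sm\mathbb{X}\cap S_{\mathbb{X}}$ is a finite union of lower-dimensional faces, hence closed and nowhere dense).

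First I would dispose of the trivial case $T=0$ (it satisfies all conditions vacuously and is a scalar multiple of an isometry with scalar $0$), so assume $T\neq 0$. For $(\mathrm{i})\Rightarrow(\mathrm{vi})$: if $T$ preserves Birkhoff--James orthogonality at each $k$-smooth point of $S_{\mathbb{X}}$, then by homogeneity it preserves it at each $k$-smooth point of $\mathbb{X}$, i.e.\ on $\ksm\mathbb{X}$, so Theorem~\ref{k-SM} gives that $T$ is a scalar multiple of an isometry. For $(\mathrm{ii})\Rightarrow(\mathrm{vi})$: a dense subset $U$ of $S_{\mathbb{X}}$ gives, by homogeneity, a set $\{\lambda u : \lambda\neq 0,\ u\in U\}$ which is dense in $\mathbb{X}$ and on which $T$ preserves Birkhoff--James orthogonality; since $\sm\mathbb{X}$ is a dense $G_\delta$ subset of $\mathbb{X}$, Theorem~\ref{Th preseves} applies and yields the conclusion. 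For $(\mathrm{vi})\Rightarrow$ the rest: if $T=\lambda S$ with $S$ an isometry, then $T$ preserves Birkhoff--James orthogonality globally, hence at every $k$-smooth point and at every point of any dense subset of $S_{\mathbb{X}}$, giving $(\mathrm{i})$ and $(\mathrm{ii})$; and by Remark~\ref{equivalence}, preservation of Birkhoff--James orthogonality at smooth points is implied by — and for smooth points is in fact tied to — preservation of $\rho_+$-, $\rho_-$- and $\rho$-orthogonality, so $(\mathrm{iii}),(\mathrm{iv}),(\mathrm{v})$ all follow as well (global $\rho_{\pm},\rho$ preservation is immediate for a scalar multiple of an isometry since these quantities scale by $\lambda^2$).

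For the reverse implications from the norm-derivative conditions, $(\mathrm{iv})\Rightarrow(\mathrm{vi})$ and $(\mathrm{v})\Rightarrow(\mathrm{vi})$ and $(\mathrm{iii})\Rightarrow(\mathrm{vi})$: by Remark~\ref{equivalence}, if $T$ preserves $\rho_+$- (resp.\ $\rho_-$-, resp.\ $\rho$-) orthogonality at each smooth point of $S_{\mathbb{X}}$, then $T$ preserves Birkhoff--James orthogonality at each smooth point of $S_{\mathbb{X}}$, and since the smooth points are precisely the $1$-smooth points, this is condition $(\mathrm{i})$ with $k=1$, so we are back to the already-proven $(\mathrm{i})\Rightarrow(\mathrm{vi})$ (equivalently, one can cite Theorem~\ref{norm-derivatives}(iv)--(vi)$\Rightarrow$(vii) directly). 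This closes the equivalence loop.

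I do not anticipate a genuine obstacle here, as all the heavy lifting is already done in Theorems~\ref{k-SM}, \ref{Th preseves}, \ref{norm-derivatives} and Corollary~\ref{sm k set}; the only point requiring a word of care is the standing hypothesis of those results — namely that $\sm\mathbb{X}$ be a dense $G_\delta$ subset of $\mathbb{X}$ — which must be checked to hold for every finite-dimensional polyhedral Banach space. This is routine: the non-smooth points of $S_{\mathbb{X}}$ lie in the union of the relative boundaries of the facets, a finite union of faces of dimension $\le n-2$, hence a closed nowhere-dense set, so $\sm\mathbb{X}\cap S_{\mathbb{X}}$ is open and dense in $S_{\mathbb{X}}$; scaling radially shows $\sm\mathbb{X}=\{\lambda x : \lambda\neq 0,\ x\in \sm\mathbb{X}\cap S_{\mathbb{X}}\}$ is open (as $S_{\mathbb{X}}$ is compact and smoothness is a radial property) and dense in $\mathbb{X}$, and being open it is trivially $G_\delta$. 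With that verified, the theorem follows by assembling the implications above.
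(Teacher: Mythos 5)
Your proposal is correct and follows essentially the same route as the paper, which simply notes that the theorem follows from Theorem~\ref{norm-derivatives} (covering the dense-set and norm-derivative conditions) and Theorem~\ref{k-SM} (covering the $k$-smooth points); your assembly of the implications, via homogeneity and Remark~\ref{equivalence}, is exactly the intended argument. Your explicit verification that $\sm\mathbb{X}$ is a dense $G_{\delta}$ subset of a finite-dimensional polyhedral space is a point the paper leaves implicit, and it is a worthwhile addition.
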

		\noindent 	\textbf{Conflict of interest}\\
		The authors have no relevant financial or non-financial interests to disclose.
		The authors have no competing interests to declare that are relevant to the content of this article.

\end{document}